
\documentclass{article}

\usepackage[letterpaper, margin=1.5in]{geometry}

\usepackage{lipsum}
\usepackage{amsfonts}
\usepackage{graphicx}
\usepackage{epstopdf}
\usepackage{algorithmic}
\ifpdf
  \DeclareGraphicsExtensions{.eps,.pdf,.png,.jpg}
\else
  \DeclareGraphicsExtensions{.eps}
\fi
\usepackage{times}                 
\usepackage{amsmath, amsthm}
\usepackage{mathtools}
\usepackage{nccmath}
\usepackage{amssymb,color, graphicx}
\usepackage{multirow}
\usepackage{epsfig}
\usepackage{algorithmic}
\usepackage{algorithm}
\usepackage{pgfplots}
\usepackage{tikz}
\usepackage[inline]{enumitem}
\usepackage{hyperref}
\usepackage{slashbox}
\usetikzlibrary{datavisualization}
\usetikzlibrary{pgfplots.groupplots}
\usetikzlibrary{external}
\tikzset{external/mode=graphics if exists}
\tikzset{
    png export/.style={
        external/system call/.add={}%
        {; convert -density 300 -transparent white "\image.pdf" "\image.png"}
    }
}
\tikzset{external/system call={lualatex
     \tikzexternalcheckshellescape -halt-on-error
     -interaction=batchmode -jobname "\image" "\texsource"}}

\usepgfplotslibrary{external}
\usepgfplotslibrary{dateplot}

\newcommand{\E}{\mathbb{E}}

\newcommand{\X}{\mathcal{X}}

\newcommand{\N}{\mathbb{N}}

\newcommand{\dprime}{{\prime\prime}}

\newtheorem{theorem}{Theorem}
\newtheorem{lemma}[theorem]{Lemma}

\newtheorem{definition}[theorem]{Definition}
\newtheorem{remark}{Remark}

\newcommand{\F}{\mathcal{F}}

\newcommand{\Hc}{\mathcal{H}}

\newcommand{\Ic}{\mathcal{I}}

\newcommand{\Xc}{\mathcal{X}}

\newcommand{\R}{\mathbb{R}}

\renewcommand{\Delta}{\varDelta}
\renewcommand{\Pi}{\varPi}
\newcommand{\Pb}{\mathbb{P}}

\DeclareMathOperator*{\argmin}{arg\,min}

\definecolor{pinegreen}{rgb}{0.1,0.5,0.2}

\usepackage{endnotes}
\let\footnote=\endnote

%


\usepackage{natbib}
 \bibpunct[, ]{[}{]}{,}{a}{}{,}%
 %
 %
 %
 %
 %

\newcommand{\TheTitle}{ Regularized Decomposition of High--Dimensional Multistage Stochastic Programs with Markov Uncertainty}


\title{{\TheTitle}}

\author{
  Tsvetan Asamov \thanks{Department of Operations Research and Financial Engieering, Princeton University} 
  \and
  Warren B. Powell\thanks{ Department of Operations Research and Financial Engieering, Princeton University}
}

\usepackage{amsopn}






\begin{document}

\maketitle

\begin{abstract}
We develop a quadratic regularization approach for the solution of high--dimensional multistage stochastic optimization problems characterized by a potentially large number of time periods/stages (e.g. hundreds), a high-dimensional resource state variable, and a Markov information process. The resulting algorithms are shown to converge to an optimal policy after a finite number of iterations under mild technical assumptions.  
Computational experiments are conducted using the setting of optimizing energy storage over a large transmission grid, which motivates both the spatial and temporal dimensions of our problem. 
Our numerical results indicate that the proposed methods exhibit significantly faster convergence than their classical counterparts, with greater gains observed for higher--dimensional problems.
\end{abstract}



\section{Introduction}
Multistage stochastic problems arise in a wide variety of real-world applications in fields as diverse as energy, finance, transportation and others.
In this paper we consider multistage stochastic linear programs that satisfy the following conditions:
\begin{enumerate*}[label=\itshape\roman*\upshape)]
\item the time horizon length $T$ is finite but potentially large (there may be hundreds of time periods and stages);
\item for each time period, the set of sample realizations of the exogenous information process is finite (and relatively small);
\item for each stage, the stage cost is a linear function of the decision. 
\end{enumerate*}

 Pereira and Pinto \cite{PereiraPinto} introduced a powerful algorithmic strategy known as Stochastic Dual Dynamic Programming (SDDP) that has received considerable attention for this problem class.
Despite its popularity, SDDP can exhibit slow convergence, especially in the setting of high--dimensional resource allocation problems.
A separate but important challenge arises when handling problems with long horizons which introduces algorithmic issues for both the setting of intertemporal independence, as well as when there is Markov dependence.  Not surprisingly, as practical problems grow in size, improving the rate of convergence of SDDP--type methods becomes an issue of growing importance.

Quadratic regularization has been among the most effective techniques for accelerating the convergence of scenario tree--based decomposition methods (see work by Ruszczy{\`n}ski \cite{ruszczynski1993regularized, ruszczynski1997decomposition, ruszczynski1997accelerating}).
However, its application to the SDDP framework has not been possible because of the exponential growth of the number of required incumbent solutions.
In this work, we propose a new regularization approach which overcomes that challenge.
The method can lead to much faster convergence by reducing the oscillation of solutions around distant vertices of the feasible regions where the addition of new cutting hyperplanes might be of little value.

This paper makes the following contributions:
\begin{enumerate*}[label=\itshape\roman*\upshape)]
\item We adopt notation that bridges the gap between dynamic programming and classical stochastic programming, which lays the foundation of our algorithmic strategy by identifying and clarifying the role of the post--decision information state;
\item We develop the first quadratic regularization method for the SDDP framework, with or without Markov dependence in the information process, that produces an optimal policy for a sampled model;
\item Unlike existing regularization methods on scenario trees, our approach remains computationally tractable even for problems that involve long time horizons;
\item Our numerical results indicate that the proposed approach exhibits faster convergence than classical SDDP
and is especially useful for problems with high--dimensional resource states. That makes the work relevant to a wide variety of practical applications.
\end{enumerate*}

Our numerical work uses the setting of optimizing energy over a fleet of storage devices for a congested transmission grid.  This problem class offers a realistic setting for testing the algorithm with anywhere from 50 to 500 batteries, allowing us to test the performance of the algorithm for resource state variables with widely varying dimensionality.  A separate challenge is that these problems exhibit a large number of time periods; our experiments model a day in 5--minute increments,
producing problems with 288 time periods.  
\section{Literature Review}
\label{s:lit}
The decomposition approach of Benders \cite{benders1962partitioning} and the L--shaped method of Van Slyke and Wets \cite{van1969shaped} originally focused on the solution of two--stage stochastic optimization problems.
Eventually, the idea was extended to the multi--period setting by Birge \cite{birge1985decomposition}, as well as Donohue and Birge \cite{donohue2006abridged} who considered successive Benders--type approximations of the recourse functions in the nested Benders decomposition algorithm for multistage problems on scenario trees.
Pereira and Pinto \cite{PereiraPinto} further extended the approach to develop Stochastic Dual Dynamic Programming which has become popular among practitioners.
On one hand, the method provides both lower and upper bounds, as well as clear convergence guarantees for many of its different
versions as has been discussed Shapiro \cite{shapiro2011analysis}, as well as Linowsky and Philpott \cite{linowsky2005convergence}.
Moreover, it is also very suitable for parallel computing and can be applied to problems with long time horizons.
Despite its progress towards overcoming the curse of dimensionality,
in its essence SDDP is a cutting plane method, a class of algorithms known to exhibit slow convergence (see \cite{ruszczynski2006nonlinear}), a behavior that is a byproduct of the well--known curse of dimensionality.
In general, their computational complexity grows exponentially with the dimension of the problem.
In the special case of only two time periods, the SDDP algorithm is equivalent to the well known cutting plane method of Kelley \cite{kelley1960cutting} which takes
$\displaystyle O\bigg(\frac{\ln\epsilon^{-1}}{2\ln 2}\Big[\frac{2}{\sqrt{3}}\Big]^{n-1}\bigg)$
iterations to achieve an $\epsilon$--optimal solution on an $n$--dimensional problem as pointed out by Nesterov and Nesterov \cite{nesterov2004introductory}.

Rockafellar \cite{rockafellar1976monotone} introduced the proximal point algorithm for the minimization of (deterministic) lower semicontinuous proper convex functions.
The quadratic regularization of two--stage linear stochastic optimization problems was developed by Ruszczy{\'n}ski \cite{ruszczynski1986regularized, ruszczynski1997accelerating}.
The same idea has also been implemented in the two--stage and multistage versions of the Stochastic Decomposition method developed by Higle and Sen  
\cite{ higle1994finite,sen2014multistage}, as well as the decomposition work of Morton \cite{morton1996enhanced}.
All of these methods utilize a scenario tree, either explicitly or implicitly (when indexing regularization terms by the entire history $H_t$), and consider separate incumbent solutions for every parent node in the scenario tree.
That limits their applicability to problems with short time horizons. On the other hand, the method described below is universal and can be applied to problems with a
large number of time periods.

\section{Problem Formulation}
\label{s:2}
Given a probability space $(\varOmega,\F,P)$ with a sigma--algebra $\F$, and
a filtration $\{\emptyset,\varOmega\}=\F_1 \subset \F_2 \subset\dots\subset\F_T = \F$, we consider a
stochastic process $\{W_t,~ t=1,\dots,T\}$ adapted to $\{\F_t,~t=1,\dots,T\}$.  Throughout our presentation, we adopt the convention that any variable indexed by $t$ is $\F_t$--measurable (surprisingly, this is not a standard assumption).
Our goal is to develop new solution methods for the following multistage linear stochastic programming problem:
\begin{equation}
\label{eq:1}
\min_{\mathrlap{\substack{A_0x_0 = b_0\\x_0\geq 0}}} \langle c_0, x_0\rangle + \E_1\left[\min_{\mathrlap{\substack{B_0x_0 + A_1x_1 = b_1\\x_1\geq 0}}} \langle c_1, x_1\rangle + \E_2\left[\dots +
\E_T\left[\min_{\mathrlap{\substack{B_{T-1}x_{T-1}+A_Tx_T=b_T\\ x_T\geq 0}}} \qquad\langle c_{T}, x_T\rangle\qquad\right]\dots\right] \right].
\end{equation}
The components of the information process $W_t = (A_t, B_t,b_t, c_t), t=1,\dots,T$ are the $\F_t$--measurable random matrices $A_t, B_t$ and vectors $b_t, c_t$,
while $A_0, B_0, b_0, c_0$ are assumed to be deterministic components of the initial state of the system $S_0 = (A_0, B_0, b_0, c_0)$.
We denote the sets of possible realizations of $W_t$ with $\Omega_t ,~ t=1,\dots, T$.
Those correspond to nested partitions of $\Omega$ given by the filtration $\{\F_t,~t=1,\dots,T\}$, and each $w\in\Omega$ can be represented as $\omega = (\omega_1, \omega_2,\dots,\omega_T)\in \Omega_1\times\Omega_2\times\dots\times\Omega_T$.
We assume that each sample set $\Omega_t$ has a finite number of elements that is small enough to be enumerated computationally.

\begin{definition}
The information history at time $t$ is $H_t = \{S_0, \omega_1,\omega_2,\dots, \omega_t \},$ where $H_t\in\Hc_t = \{S_0\}\times\Omega_1\times\Omega_2\times,\dots\times\Omega_t$.
Further, we define the post--decision information history at time $t$ to be $H_t^x = \{S_0, x_0, \omega_1, x_1,\omega_2, x_2, \dots, \omega_t, x_t\}$.
\end{definition}
Employing a dynamic programming framework, we distinguish between two types of states of the system, the pre--decision states $S_t$ and the post--decision states $S_t^x$.
\begin{definition} The (pre--decision) state $S_t$ of the system at time $t\geq 1$ is all the information in $ H^x_{t-1} \cup \omega_t$
that is necessary and sufficient to make a decision at time $t$,
and model the impact of $H_{t-1}^x \cup \omega_t$ on the computation of costs, constraints and transitions from time $t$ onward.
\end{definition}

Furthermore, the pre--decision state of the system $S_t$ can be represented as $S_t = (R_t, I_t)$, where
the pre--decision resource state $R_t$ is the amount of resources available at the beginning of time period $t$,
and $I_t$ is the pre--decision information state.
Please note that $R_t$ depends on both the decision $x_{t-1}$ and the random vector $b_t$,
$$ R_t = B_{t-1}x_{t-1} - b_t.$$
The information state $I_t$ contains all the remaining information of $S_t$ that is necessary and sufficient to model the system but is not in $R_t$.
Formally, we consider the following model for the evolution of the system over time:
$$S_0\xrightarrow{x_0} S^x_0\xrightarrow{\omega_1} S_1\xrightarrow{x_1} S^x_1\xrightarrow{\omega_2}  \ldots \xrightarrow{\omega_T} S_T \xrightarrow{x_T} S^x_T.$$

\begin{definition} The post--decision state $S_t^x, t\geq 0$ of the system at time $t$ is all the information in the post--decision history $H_t^x$
that is necessary and sufficient to model the impact of $H_t^x$ on the computation of costs, constraints and transitions from time $t$ onward, after a decision has been  made.
\end{definition}

We also represent the post--decision state of the system as $S_t^x = (R_t^x, I_t^x)$.
The \emph{post--decision resource state} $R_t^x$ is given by $$R_t^x = B_t x_t,$$ and the
\emph{post--decision information state} $I_t^x$ represents all the information in $S_t^x$
that is not in $R_t^x$.
Moreover, we refer to the rank of the matrix $B_t$ as the \emph{dimension of the post--decision resource state}.
If we define $$C(S_t, x_t) := \langle c_t, x_t\rangle$$ and the set $\Xc_t(S_t)$ is such that the following conditions are satisfied,
$$\Xc_t(S_t) :=\left\{
  \begin{array}{lr}
  x_t \in \R^{n_t}~:~ A_tx_t = b_t, &\mbox{ if } t = 0\\
  x_t \in \R^{n_t}~:~ B_{t-1}x_{t-1} + A_tx_t = b_t, &\mbox{ if }  t > 0
 \end{array}
  \right.
$$
then we can rewrite problem (\ref{eq:1}) using dynamic programming notation as follows,
\begin{equation}
\label{eq:2}
\min_{\mathrlap{\substack{x_0\in\Xc_0(S_0)}}} C(S_0, x_0)+ \E_1\left[\min_{\mathrlap{\substack{x_1\in\Xc_1(S_1)}}} C(S_1, x_1) + \E_2\left[\dots +
\E_T\left[\min_{\mathrlap{x_T\in\Xc_T(S_T)}} C(S_T, x_T)\right]\dots\right] \right].
\end{equation}
Since the problem is stochastic, its optimal solution is not a vector but rather a policy $\pi$,
which is a function that maps states $S_t$ to decisions $x_t\in \X_t(S_t)$.
Thus, we can consider the optimization problem (\ref{eq:2}) to be a search for an optimal policy $\pi^*$ over the set $\Pi$ consisting of all feasible and implementable policies
\begin{equation}
\label{eq:3}
\min_{\pi\in\Pi}\E\left[\sum_{t=0}^T C(S_t, X_t^\pi(S_t))\right].
\end{equation}
We refer to equation (\ref{eq:3}) as the \emph{base model}, and we can solve it by constructing an optimal lookahead policy.  
In that case, the optimal decisions $X^*_t(S_t)$ corresponding to $\pi^*$ satisfy the following optimality equation:
\begin{eqnarray}
X^*_t(S_t) &\in& \argmin_{x_t\in \Xc_t(S_t)} \left(C(S_t,x_t) + \min_{\pi\in\Pi}\E\left\{\left.\sum_{t'=t+1}^T C(S_{t'},X^\pi_{t'}(S_{t'}))\right|S_{t}^x\right\}\right).
\label{eq:lookaheadpi}
\end{eqnarray}
Therefore, we can also specify an optimal lookahead policy $\pi^*$ by employing its corresponding post--decision value functions $V^*_t(S_t^x)$, 
\begin{eqnarray}
V^*_t(S_{t}^x)= \min_{\pi\in\Pi}\E\left\{\left.\sum_{t'=t+1}^T C(S_{t'},X^\pi_{t'}(S_{t'}))\right|S_{t}^x\right\}
\end{eqnarray}
\begin{remark}
It is common for practitioners to employ a scenario tree in order to construct an approximate lookahead policy for problem (\ref{eq:2}) as follows: 
\begin{eqnarray}
X^*_t(S_t) &\in& \argmin_{x_t\in \Xc_t(S_t)} \left(C(S_t,x_t) + \min_{\pi\in\Pi}\E\left\{\left.\sum_{t'=t+1}^{t''} C(S_{t'},X^\pi_{t'}(S_{t'}))\right|S_{t}^x\right\}\right).
\label{eq:approxlookaheadpi}
\end{eqnarray}
When $t'' < T$ the optimality of the approximate lookahead policy given by (\ref{eq:approxlookaheadpi}) cannot be established. In addition, lower and upper bounds to the optimal value of problem (\ref{eq:2}) 
might not be readily available  (due to approximation errors stemming from the stage reduction).
Nonetheless, lookahead models can produce high-quality solutions in selected problems (see \cite{gulten2014two}).  
\end{remark}
Thus, at any time period $t = 0, \dots, T$, the optimal decision $X_t^*(S_t)$ for problem (\ref{eq:2}) can be computed as
$$ X_t^*(S_t)\in \argmin_{x_t\in\Xc_t(S_t)}\{ C(S_t, x_t) + V^*_t(S_{t}^x)\}.$$
Hence, the search for an optimal policy $\pi^*$ is equivalent to the computation of optimal post--decision value functions $V^*_t(S^x_t), t=0,\dots, T$.
One of the well--known methods that allows us to construct such value functions
is Stochastic Dual Dynamic Programming.

\section{Stochastic Dual Dynamic Programming}
\label{s:sddp}
\label{s:3}
Typically, stochastic programming techniques model the flow of information by utilizing
a scenario tree that involves the entire set $\Hc_t= \{S_{0}\}\times\Omega_{1}\times\dots\times \Omega_{t}$.
While such an approach is useful for analytical purposes, its practical applicability is limited by the
exponential growth of the number of nodes in the scenario tree when the length of the time horizon $T$ increases.
To overcome that challenge, Pereira and Pinto \cite{PereiraPinto} introduced the Stochastic Dual Dynamic Programming (SDDP) method for the solution of multistage stochastic linear optimization problems over long time horizons.
SDDP overcomes the combinatorial explosion of the information state by exploiting (a key and limiting assumption of) stagewise independence as
$\Pb(\omega_{t+1}|H_{t}) = \Pb(\omega_{t+1})$, and therefore all post--decision states $S^x_t$ share a single information state $I^x_t$.
Hence, $S_t^x = R_t^x$ and the optimal value functions $V^*_t(S_t^x)$ only depend on the post--decision resource states $R^x_t$,
$$ V^*_t(S^x_t) = V^*_t(R^x_t),~ t=0,\dots, T.$$
The convexity of the optimal value functions $V^*_t(R^x_t)$ is the key property that allows one to partially escape the curse of
dimensionality arising from partitioning the resource space.
Instead, $V^*_t(R^x_t)$ can be approximated with lower--bounding convex outer approximations $\overline{V}_t^k(R^x_t)$
whose functional form is the maximum over a collection of affine functions.
Those are commonly known as cutting hyperplanes or \emph{Benders cuts}, and are constructed at the resource points $R^{x,j}_t$ that are visited during the $j$--th forward pass,
\begin{equation}
\label{eq:vfa}
\overline{V}_{t}^{k}(R^x_t) :=  \max_{j \leq k}\big\{\alpha^j_{t} + \langle\beta^j_{t}, R^x_t - R^{x,j}_t\rangle\}.
\end{equation}
For example, at iteration $k$ we would obtain $R^{x,k}_t$ by solving the following linear program
\begin{equation}
\label{eq:4}
 x^k_t \in 
 \argmin_{x_t\in\Xc_t(S_t)}~\left\{
C(S_t, x_t) + \overline{V}^{k-1}_{t}(R^x_t)\right\}
 ,~ \mbox{ and setting } R_{t}^{x,k} \gets B_t^k x_t^k
\end{equation}
where $\overline{V}^0_t(R^x_t) = 0.$ 

The approximations $\overline{V}^{k-1}_t(R^x_t)$ are updated in the backward pass of iteration $k$
by constructing a cutting hyperplane $h_{t}^k(R^x_{t})$ to the optimal value function $V^*_t(R^x_t)$. To accomplish this,
we use a lower bound $\underbar{V}^k_{t+1}(R^{x,k}_t)$ (derived from solutions to subproblems for time $t+1$) to $V^*_t(R^{x,k}_t)$,
\begin{equation}
\label{eq:h}
\displaystyle h_{t}^k(R^x_{t}) :=  \underline{V}^k_{t+1}(R^{x,k}_{t})  + \langle\beta^k_{t}, R^x_{t} - R^{x,k}_{t}\rangle.
\end{equation}
Please note that the hyperplane $h^k_t(R_{t}^{x})$ is not necessarily tangent to $V^*_t(R^x_t)$ since\\
$\underbar{V}_{t+1}^k(R^{x,k}_t)$ might be strictly smaller than $V^*_t(R^{x,k}_t)$.
\begin{remark} When we need to emphasize the dependence of the feasible set $\Xc_{t+1}(S_{t+1})$ on the previous post--decision state $R^x_t$,
we use the notation $\Xc_{t+1}(R^x_t, I_{t+1})$, where the exogenous information in $R_{t+1}$ that is not contained in $R^x_t$ is assumed to be contained in $I_{t+1}$.
\end{remark}
In order to construct $\underbar{V}_{t+1}^k $,
we consider every element of the sample set $\omega_{t+1}\in\Omega_{t+1}$ and denote with $\underline{V}^k_{t+1}(R^x_{t}, \omega_{t+1})$
the optimal value of the following optimization problem,

\begin{equation*}
  \underline{V}^k_{t+1}(R_{t}^{x}, \omega_{t+1}) := 
  \min_{x_{t+1}\in\Xc_{t+1}(R_{t}^{x}, I_{t+1}(\omega_{t+1}))}~\left\{C(S_{t+1}(\omega_{t+1}), x_{t+1}) + \overline{V}^{k}_{t+1}(R^x_{t+1})
  \right\}.
\end{equation*}
Finally, we set
\begin{equation*}
  \underline{V}^k_{t+1}(R_{t}^{x}) := \sum_{\omega_{t+1}\in\Omega_{t+1}} \Pb(\omega_{t+1})\underline{V}^k_{t+1}(R^x_{t}, \omega_{t+1}).
\end{equation*}
Hence, if we choose
\begin{equation*}
 \beta^k_{t}\in \partial_R \underline{V}^k_{t+1}(R_{t}^{x,k}),
\end{equation*}
%
then we can construct a new aggregated cut $\displaystyle h_{t}^k(R^x_{t})$ as described in equation (\ref{eq:h}).
Thus, in the backward pass of iteration $k$, we can update the approximate value function $\overline{V}_{t}^{k}(R^x_{t})$ as follows,
  $$\displaystyle \overline{V}_{t}^{k}(R^x_{t}) := \max\big\{\overline{V}_{t}^{k-1}(R^x_{t}),~h_{t}^k(R^x_{t})\big\}.$$

%

If none of the constructed cuts are removed, then the growing collections of affine functions
generate sequences of \emph{monotonically increasing lower bounding approximations} $\overline{V}_t^k(R^x_t)$ to the optimal post--decision value functions
$V^*_t(R^x_t)$ for any $t=0,\dots, T-1$.
$$\overline{V}^{k-1}_t(R^x_t) \leq  \overline{V}^{k}_t(R^x_t) \leq V^*_t(R^x_t), \forall k\in \N, t=0,\dots, T-1.$$
Furthermore, in this work we assume relatively complete recourse, i.e. for any feasible solutions to the optimization problems at time periods $t=0,\dots, T-1$, there exists a
feasible solution to any realized stage $t+1$ subproblem with probability one. This assumption alleviates the need for feasibility cuts and allows us to
improve the clarity of the presentation.

\section{Quadratic Regularization}
\label{s:4}
Existing regularization approaches \cite{ruszczynski1997accelerating,ruszczynski1986regularized, higle1994finite,sen2014multistage, morton1996enhanced} utilize a scenario tree and consider separate incumbent solutions
$\bar{x}_t(H_t)$ for every possible history ${H}_{t}\in\Hc_t, t=0,\dots,T-1$.
The underlying idea in such methods has been to augment optimization problems of the form (\ref{eq:4}) with a regularization term as follows,
 \begin{equation}
\label{eq:5}
 x^k_t \in 
 \argmin_{x_t\in\Xc_t(S_t)}~ \left\{C(S_t, x_t) + \overline{V}^{k-1}_{t}(R^x_t) + \frac{\rho}{2}||{x_t} - \bar{x}_t(H_t)||_2^2\\
 \right\}.
\end{equation}

As the algorithm progresses, each incumbent solution $\bar{x}_t(H_t)$ is updated to a new optimal solution, if certain conditions are satisfied.
While such an approach is feasible for problems on scenario trees with small $T$,
it is not practical for non--trivial time horizons. The exponential growth of the scenario tree ensures that only a tiny fraction of all possible realizations
$H_t\in\Hc_t,~t=0,\dots, T-1$ could be examined in the forward pass in a reasonable computational time. Moreover, multiple visits to each $H_{t}\in\Hc_t,~t=0,\dots, T-1$
and multiple updates of its incumbent solution are also out of the realm of computational feasibility for most practical instances.
One way to remedy this difficulty would be for different histories to share incumbent solutions.
For example, a single incumbent solution $\bar{x}_t$ can be shared among all realizations $H_t\in\Hc_t$ and that would result in the optimization problem
 \begin{equation}
\label{eq:6}
 x^k_t \in  
 \argmin_{x_t\in\Xc_t(S_t)}~ \left\{
C(S_t, x_t) + \overline{V}^{k-1}_{t}(R^x_t) + \frac{\rho}{2}||x_t - \bar{x}_t||_2^2\\
 \right\}.
\end{equation}

Equation (\ref{eq:6}) can be used in place of equation (\ref{eq:4}), and it would still result in a convergent method for a fixed set of incumbent solutions
$\bar{x}_t, t=0,\dots,T-1$.
However, the optimality of the resulting policy cannot be established.
Moreover, since the purpose of the quadratic regularization term is to mitigate the inaccuracy of the value function approximations, we do not need to regularize
around the entire vector $x_t$ (which might be very high--dimensional) but only around the parameters $R^x_t$ of the post--decision value function approximations $\overline{V}^{k-1}_t(R^x_t)$.
Thus, we can adjust problem (\ref{eq:6}) to address these concerns by making the following adjustments,
 \begin{equation}
\label{eq:7}
 x^k_t \in
 \argmin_{x_t\in\Xc_t(S_t)}~  \left\{C(S_t, x_t) + \overline{V}^{k-1}_{t}(R^x_t) + \frac{\varrho^k}{2}\Big\langle R^x_t - \overline{R}^{x,k-1}_t, Q_t(R^x_t - \overline{R}^{x,k-1}_t)\Big \rangle\\
 \right\}
\end{equation}
where the sequence of penalty coefficients $\{\varrho^k\}$ is such that
$\varrho^k \geq 0, ~\forall k\in\N$ and $\displaystyle \lim_{k\rightarrow\infty}\varrho^k = 0$. We also introduce a positive semi--definite matrix $Q_t \succeq 0$, which
can be used to address any scaling concerns across different entries of the resource vectors $R^x_t$.
Please note that the meaning of the proposed regularization strategy is quite different from its scenario tree counterparts, as it aims to steer the solution towards a ``known'' region of the value function domain, rather than to the ``correct'' solution for the given history $H_t$ of the stochastic process. Hence, we choose the incumbent solutions to be the previous points encountered in the forward pass since the cuts supported at those points are the ones generated with the most information.
Finally, we also point out that unlike the case of scenario trees, in the current method we do not aim for the convergence of the incumbent solutions towards any point. Interested readers are free to choose different incumbent solutions that they might find appropriate, and the convergence results presented below would still hold.

Now, we can substitute equation (\ref{eq:7}) for equation (\ref{eq:4}) in the forward pass of SDDP, and the new procedure would still converge to an optimal
solution of problem (\ref{eq:2}) with probability one after a finite number of iterations. That might appear somewhat surprising since gradient methods applied to quadratic optimization problems typically entail asymptotic convergence. However, in this case a finite number of iterations is sufficient since the true problem remains linear, and the quadratic terms are only used to guide the exploration phase of the forward pass. Moreover, the generation of the supporting hyperplanes in the backward pass utilizes linear programming problems which can generate only a finite number of different cuts when basic dual feasible solutions are used. 
The details of the resulting method are presented in Algorithm \ref{a:1}, and we study its convergence properties below.

\begin{algorithm}
	\caption{Quadratic Regularization Method with Stagewise Independence}
\footnotesize
\begin{algorithmic}[1]
\STATE Choose $Q_t\succeq 0, t=0,\dots,T$, and define sequence $\{\varrho^k\}$.
\STATE Define $\overline{V}_{T}^k(R^x_T):= V^*_T(R^x_T), k=0,\dots, K$.
\STATE Define $\overline{V}_{t}^0(R^x_t):=-\infty, t=0,\dots, T-1$.
\STATE $(R^{x,k}_{-1}, I_0) \gets S_0,~ k = 0,\dots, K$
\FOR{$k=0,\dots, K$}
\STATE \emph{Forward Pass:}
\STATE Sample $\omega\in\Omega$.
\FOR{$t=0,\dots,T$}
\IF{$(k = 0)$}
\STATE
\begin{equation*}
 \mbox{Select }x^k_t \in 
 \argmin_{x_t\in\Xc_t(R^{x,k}_{t-1}, I_t(\omega))}~ \left\{C(S_t(\omega), x_t)
 \right\}
\end{equation*}

\ELSE
\IF{$t < T$}
\STATE

\begin{equation*}
 x^k_t \in 
 \argmin_{x_t\in\Xc_t(R^{x,k}_{t-1}, I_t(\omega))}~ \left\{C(S_t(\omega), x_t) + \overline{V}^{k-1}_{t}(R^x_t) +
      \frac{\varrho^k}{2} \Big\langle R^x_t - \overline{R}^{x,k-1}_t, Q_t(R^x_t - \overline{R}^{x,k-1}_t)\Big\rangle
 \right\}
 \end{equation*}


\ELSE
\STATE

\begin{equation*}
\mbox{Select } x^k_t \in 
 \argmin_{x_t\in\Xc_t(R^{x,k}_{t-1}, I_t(\omega))}~ \left\{C(S_t(\omega), x_t) + \overline{V}^{k-1}_{t}(R^x_t) 
 \right\}
 \end{equation*}


\ENDIF
\ENDIF
\STATE $\mbox{Set } R_{t}^{x,k} \gets B_t^k x_t^k $;  $S_{t+1}(\omega) \gets (R_t^{x,k} - b_{t+1}(\omega), I_{t+1}(\omega))$
\ENDFOR
\STATE \emph{Backward Pass:}
\FOR{$t=T,\dots, 1$}
\STATE \begin{equation*}
  \mbox{Define } \underline{V}^k_t(R^{x}_{t-1}, \omega_t) := 
  \min_{x_t\in\Xc_t(R^{x}_{t-1}, I_t(\omega_t))}~ \left\{C(S_t(\omega_t), x_t) + \overline{V}^k_{t}(R^x_t)
  \right\} 
  \end{equation*}
  \FORALL{$\omega_t\in\Omega_t$}
\STATE \begin{equation*}
  \mbox{Select }\underline{\beta}^k_t(\omega_t) \in \partial_{R^x_{t-1}} \underline{V}^k_t(R_{t-1}^{x,k}, \omega_t) 
  \end{equation*}
  \ENDFOR
  \STATE $\displaystyle \alpha^k_{t-1}\gets \sum_{\omega_t\in\Omega_t} \Pb(\omega_t)\underline{V}^k_t(R^{x,k}_t, \omega_t)$; $\displaystyle \beta^k_{t-1} \gets \sum_{\omega_t\in\Omega_t} \Pb(\omega_t) \underline{\beta}_{t}^k(\omega_t)$
  \STATE $\displaystyle h_{t-1}^k(R^{x}_{t-1}) :=  \alpha^k_{t-1} + \langle\beta^k_{t-1}, R^x_{t-1} - R^{x,k}_{t-1}\rangle$
  \STATE $\displaystyle \overline{V}_{t-1}^{k}(R^x_{t-1}) := \max\big\{\overline{V}_{t-1}^{k-1}(R^x_{t-1}),~h_{t-1}^k(R^x_{t-1})\big\}$
\ENDFOR
\STATE
  \begin{fleqn} \begin{equation*}
  \underline{V}^k_0 \gets \left\{
  \begin{split}
  \min_{x_0\in\Xc_0(S_0)}~&C(S_0, x_0) + \overline{V}^k_{0}(R^x_0)\\
  \end{split}
  \right\} 
  \end{equation*}
  \end{fleqn}
\STATE $\overline{R}^{x,k}_t \gets R^{x,k}_{t},~t=0,\dots, T-1$
\ENDFOR
\end{algorithmic}
\label{a:reg-sddp}
\label{a:1}
\end{algorithm}
\normalsize

\begin{lemma}[\cite{philpott2008convergence, shapiro2011analysis}] Suppose that dual basic solutions are used in the solution of subproblems in the backward pass of Algorithm \ref{a:1}.
Then, there exist a finite number of possible value function approximations $\overline{V}_t(\cdot), t=0,\dots,T$.
\label{l:3}
\end{lemma}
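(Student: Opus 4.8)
The plan is to argue by backward induction on the stage index $t$, establishing the slightly stronger statement: for each $t$ there is a \emph{finite} set $\Pc_t$ of affine functions and a finite set $\Gc_t$ of vectors such that every cut ever appended to $\overline{V}_t$ (over all iterations $k$, all sampled scenarios $\omega$, and all dual basic solutions that could be selected) lies in $\Pc_t$ and has gradient in $\Gc_t$. Granting this, a value function approximation $\overline{V}_t^k$ is always the pointwise maximum of $-\infty$ together with some subset of $\Pc_t$, and there are at most $2^{|\Pc_t|}$ such subsets; hence only finitely many functions can occur, which is the assertion of the lemma. The base case $t=T$ is trivial because Algorithm~\ref{a:1} fixes $\overline{V}_T^k \equiv V^*_T$ for every $k$.

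For the inductive step, I would assume the claim at stage $t+1$ and examine, for a fixed $\omega_{t+1}\in\Omega_{t+1}$, the parametric linear program that defines $\underline{V}^k_{t+1}(R^x_t,\omega_{t+1})$: introducing an epigraph variable $\theta$ for the term $\overline{V}^k_{t+1}(R^x_{t+1})$, this is an LP in the variables $(x_{t+1},\theta)$ in which $R^x_t$ enters \emph{only} the right--hand side, through the balance constraints $A_{t+1}x_{t+1}=b_{t+1}-R^x_t$. Its constraint matrix and cost vector are assembled from $A_{t+1}(\omega_{t+1})$, $B_{t+1}(\omega_{t+1})$, $c_{t+1}(\omega_{t+1})$ -- each taking one of finitely many values since $\Omega_{t+1}$ is finite -- together with the gradients of the cuts composing $\overline{V}^k_{t+1}$, which by the inductive hypothesis lie in the finite set $\Gc_{t+1}$. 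Consequently the dual feasible polyhedron of this LP is one of finitely many polyhedra, each having finitely many vertices. Since a \emph{dual basic} solution is used, the multipliers selected on the balance rows -- which, by LP parametric sensitivity, constitute the chosen subgradient $\underline{\beta}^k_{t+1}(\omega_{t+1})$ and simultaneously identify the affine piece $a_{\omega_{t+1}}(\cdot)$ of the map $R^x_t\mapsto\underline{V}^k_{t+1}(R^x_t,\omega_{t+1})$ active at the visited point $R^{x,k}_t$ -- range over a finite set; the associated affine pieces $a_{\omega_{t+1}}$ therefore form a finite set $\A^{\omega_{t+1}}_{t+1}$, their intercepts being built from the finitely many data realizations and from the (finitely many, by induction) cut intercepts of $\overline{V}^k_{t+1}$.

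Finally I would identify the cut itself. From the backward pass, $\beta^k_t=\sum_{\omega_{t+1}}\Pb(\omega_{t+1})\,\underline{\beta}^k_{t+1}(\omega_{t+1})$ and $\alpha^k_t=\sum_{\omega_{t+1}}\Pb(\omega_{t+1})\,\underline{V}^k_{t+1}(R^{x,k}_t,\omega_{t+1})=\sum_{\omega_{t+1}}\Pb(\omega_{t+1})\,a_{\omega_{t+1}}(R^{x,k}_t)$, so, \emph{as an affine function}, $h^k_t(\cdot)=\alpha^k_t+\langle\beta^k_t,\cdot-R^{x,k}_t\rangle=\sum_{\omega_{t+1}}\Pb(\omega_{t+1})\,a_{\omega_{t+1}}(\cdot)$. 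Thus $h^k_t$ is a fixed convex combination of one element drawn from each $\A^{\omega_{t+1}}_{t+1}$, hence belongs to the finite set $\Pc_t:=\{\sum_{\omega_{t+1}}\Pb(\omega_{t+1})a_{\omega_{t+1}}:a_{\omega_{t+1}}\in\A^{\omega_{t+1}}_{t+1}\}$, and its gradient lies in the finite set $\Gc_t$ of the corresponding gradient combinations. This closes the induction.

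The step I expect to be the main obstacle is the one in the second paragraph: showing that not merely the \emph{gradient} but the entire affine cut comes from a finite set. The forward--pass points $R^{x,k}_t$, and the points $R^{x,j}_{t+1}$ buried inside the stage--$(t+1)$ cuts, range over a continuum, so $\alpha^k_t$ and the shift $R^{x,k}_t$ individually vary continuously; the point is that, precisely because a \emph{basic} dual solution is used, the affine function $h^k_t$ collapses onto one of finitely many affine pieces of the (finitely many possible) expected recourse functions, and the continuous dependence on the visited point cancels. Making this cancellation rigorous -- essentially, verifying that a basic optimal dual solution of the stage--$(t+1)$ LP at right--hand side parameter $R^{x,k}_t$ reproduces exactly the active affine piece rather than merely some supporting hyperplane at that point -- is the crux, and it is exactly where the ``dual basic solution'' hypothesis of the lemma is consumed.
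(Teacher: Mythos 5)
Your proof is correct and is essentially the standard argument from the works the paper cites (\cite{philpott2008convergence, shapiro2011analysis}); the paper itself gives no proof of this lemma, deferring entirely to those references. In particular, you correctly isolate and resolve the crux --- that the dual basic optimal multipliers on the resource rows make the cut coincide, \emph{as an affine function}, with a fixed probability--weighted combination of finitely many affine pieces, so the continuous dependence on the visited point $R^{x,k}_t$ cancels.
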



Since the regularization terms are artificial for the original problem, we exclude them from the definition of an optimal policy.
\begin{definition}
The value function approximations $\overline{V}_t^k, t = 0,\dots,T$ are optimal for problem (\ref{eq:2}) if for any realization $\omega\in\Omega$,
\begin{equation}
\label{eq:8}
 \min_{x_t\in\Xc_t(S_t(\omega))}~ \big\{C(S_t(\omega), x_t) + \overline{V}^{k}_{t}(R^x_t)\big\} =
\min_{x_t\in\Xc_t(S_t(\omega))}~ \big\{C(S_t(\omega), x_t) + V^*_{t}(R^x_t)\big\}
\end{equation}
for $t=0,\dots,T$.
\end{definition}
\begin{theorem} Suppose that Algorithm \ref{a:reg-sddp} satisfies the following assumptions:
\label{t:7}
\begin{enumerate}
\item $\overline{V}_T^k(\cdot) \equiv V^*_T(\cdot),~k\in\N.$ \label{as:1}
\item Dual basic optimal solutions are used in the backward pass. \label{as:2}
\item Every element $\omega\in \Omega$ has a strictly positive probability $\Pb(\omega) > 0$.\label{as:3}
\item $\varrho^k \geq 0$ and $\displaystyle \lim_{k\rightarrow \infty} \varrho^k = 0$.
\item The feasible sets $\Xc_t(S_t)$ are bounded for each $t=0,\dots,T$.\label{as:5}
\end{enumerate}
Then, the regularization method presented in Algorithm \ref{a:1} converges to an optimal policy of problem (\ref{eq:2}) after a finite number of iterations
with probability one.
\end{theorem}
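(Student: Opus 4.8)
The plan is to show first that the value function approximations stabilize after finitely many iterations, and then to verify that the stabilized approximations are optimal in the sense of the definition preceding the theorem. By Lemma~\ref{l:3} and Assumption~\ref{as:2}, for each $t$ there are only finitely many possible approximations $\overline{V}_t(\cdot)$. The update rule $\overline{V}_t^k=\max\{\overline{V}_t^{k-1},h_t^k\}$ makes $k\mapsto\overline{V}_t^k$ pointwise nondecreasing, and the monotone lower‑bounding property recalled in Section~\ref{s:3} gives $\overline{V}_t^k\le V^*_t$; a nondecreasing sequence taking values in a finite set is eventually constant, so there is a (random) index $k^*$ with $\overline{V}_t^k=\overline{V}_t^{k^*}=:\overline{V}_t$ for all $k\ge k^*$ and all $t$. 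For $t<T$, $\overline{V}_t$ is a maximum of finitely many affine functions (nonempty once the first backward pass has run, by relatively complete recourse and Assumption~\ref{as:5}), hence convex, finite and continuous; and $\overline{V}_T=V^*_T$ by Assumption~\ref{as:1}. It then remains to show the $\overline{V}_t$ are optimal, the ``finite number of iterations'' being $k^*$; the limiting arguments below are a reasoning device about the already‑produced $\overline{V}_t$, not extra iterations.

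For $k\ge k^*$ no cut improves the approximation, so every backward‑pass cut satisfies $h_{t-1}^k(\cdot)\le\overline{V}_{t-1}(\cdot)$ pointwise; evaluating at the forward point gives $\underline{V}_t(R^{x,k}_{t-1})\le\overline{V}_{t-1}(R^{x,k}_{t-1})$, where after stabilization $\underline{V}_t$ is the fixed convex piecewise‑linear (hence continuous) function built from $\overline{V}_t$. Fix a realization $\omega\in\Omega$. By Assumption~\ref{as:3} and the second Borel--Cantelli lemma, w.p.~$1$ the set $\{k\ge k^*:\omega^k=\omega\}$ is infinite; along it the matrices and right‑hand sides are frozen at their $\omega$‑values, so the forward subproblems differ from their unregularized counterparts only through the quadratic penalty, whose coefficient tends to $0$ ($\varrho^k\to 0$) while the penalty itself is uniformly bounded over the bounded feasible sets (Assumption~\ref{as:5}), which also contain the incumbent centres. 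Using Assumption~\ref{as:5} to extract a further subsequence along which $x^k_t\to x^*_t$ and $R^{x,k}_t\to R^{x,*}_t$ for all $t$ simultaneously, a stage‑by‑stage forward induction on $t$ shows the limiting trajectory is \emph{greedy} for $\overline{V}$: since the perturbation vanishes uniformly and the stage‑$t$ feasible set depends (with the data frozen) only on a right‑hand side that converges, Lipschitz stability of parametric linear programs yields $x^*_t\in\argmin_{x_t\in\Xc_t(S^*_t)}\{C(S^*_t,x_t)+\overline{V}_t(R^x_t)\}$, with $S^*_t$ the limiting state. Doing this for each of the finitely many $\omega\in\Omega$ produces, on a common subsequence, a greedy‑$\overline{V}$ policy together with the collection of states it visits.

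Now induct backward on $t$ to prove each $\overline{V}_t$ is optimal. The case $t=T$ is Assumption~\ref{as:1}. Assume $\overline{V}_t$ is optimal, i.e. $\min_{x_t\in\Xc_t(S)}\{C(S,x_t)+\overline{V}_t(R^x_t)\}=\min_{x_t\in\Xc_t(S)}\{C(S,x_t)+V^*_t(R^x_t)\}$ at every state $S$ reachable under the limiting greedy policy. For a limiting forward point $R^{x,*}_{t-1}$, summing this identity over $\omega_t\in\Omega_t$ with weights $\Pb(\omega_t)$ and using the Bellman equation for the true post‑decision value function gives $\underline{V}_t(R^{x,*}_{t-1})=V^*_{t-1}(R^{x,*}_{t-1})$. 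Passing to the limit in the no‑improvement inequality of the previous paragraph (valid by continuity of $\underline{V}_t$ and $\overline{V}_{t-1}$) gives $\underline{V}_t(R^{x,*}_{t-1})\le\overline{V}_{t-1}(R^{x,*}_{t-1})$, while $\overline{V}_{t-1}\le V^*_{t-1}$ everywhere; hence $\overline{V}_{t-1}(R^{x,*}_{t-1})=V^*_{t-1}(R^{x,*}_{t-1})$. Since $x^*_{t-1}$ is greedy for $\overline{V}_{t-1}$ at $S^*_{t-1}$ and $\overline{V}_{t-1}$ agrees with $V^*_{t-1}$ at the resulting post‑decision point,
\begin{equation*}
\min_{x_{t-1}\in\Xc_{t-1}(S^*_{t-1})}\{C(S^*_{t-1},x_{t-1})+\overline{V}_{t-1}(R^x_{t-1})\}=C(S^*_{t-1},x^*_{t-1})+V^*_{t-1}(R^{x,*}_{t-1})\ge\min_{x_{t-1}\in\Xc_{t-1}(S^*_{t-1})}\{C(S^*_{t-1},x_{t-1})+V^*_{t-1}(R^x_{t-1})\},
\end{equation*}
and the reverse inequality is immediate from $\overline{V}_{t-1}\le V^*_{t-1}$, so equality holds, which is the optimality of $\overline{V}_{t-1}$ at $S^*_{t-1}$; as these states exhaust those reachable under the limiting greedy policy, the induction closes. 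In particular $\underline{V}_0=\min_{x_0\in\Xc_0(S_0)}\{C(S_0,x_0)+\overline{V}_0(R^x_0)\}$ equals the optimal value of \eqref{eq:2}, and the limiting greedy policy attains it; thus Algorithm~\ref{a:1} produces optimal value function approximations, hence an optimal policy, after the finite number $k^*$ of iterations, w.p.~$1$.

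The stabilization step and the backward induction are routine — the former is just Lemma~\ref{l:3} plus monotonicity, the latter the usual SDDP argument once the right points are available. The crux is the middle step: showing that the regularization, although it perturbs every forward subproblem, does not prevent the forward pass from converging to genuinely greedy trajectories. This is exactly where $\varrho^k\to 0$ and Assumption~\ref{as:5} are indispensable — vanishing coefficient plus a uniform bound on the penalty over bounded feasible sets forces the perturbation to zero, and boundedness supplies the compactness needed to extract convergent subsequences of forward iterates — and where one invokes continuity of the stabilized value functions together with right‑hand‑side stability of the stage feasible sets to propagate greediness through the limit stage by stage. One further point of care is linear‑programming degeneracy: the limit identifies one particular greedy policy (the selection singled out by the incumbents), and it is that policy, together with the equality $\underline{V}_0=v^*$, that certifies optimality.
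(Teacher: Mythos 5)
Your overall architecture is genuinely different from the paper's, and is in principle a legitimate alternative: the paper argues by contradiction, extracting a minimum positive gap $\epsilon$ over the finite collection of possible approximations and scenarios, propagating it back to stage $0$, and then searching forward at a single sufficiently late iteration for the first stage at which the backward pass would generate an improving cut (contradicting stabilization); you instead take limits of the forward trajectories and run a direct backward induction on the stabilized approximations. Your stabilization step (Lemma \ref{l:3} plus monotonicity) matches the paper's, your use of Assumption \ref{as:5} and $\varrho^k\to 0$ to kill the penalty is the same as the paper's, and your explicit appeal to Borel--Cantelli via Assumption \ref{as:3} is more careful than the paper's treatment of that point.

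However, there is a genuine gap at the heart of your limit argument. Your inductive hypothesis is that $\overline{V}_t$ is optimal ``at every state reachable under the limiting greedy policy,'' and the inductive step at stage $t-1$ sums this identity over \emph{all} $\omega_t\in\Omega_t$ at the single point $R^{x,*}_{t-1}$. But the limit points you construct are indexed by full sample paths: for the path $\omega$ you obtain $R^{x,*}_{t-1}(\omega)$ along the subsequence $\{k:\omega^k=\omega\}$, and the only stage-$t$ successor of that particular point covered by your construction is the one corresponding to the $t$-th coordinate of $\omega$. A path $\tilde\omega$ agreeing with $\omega$ up to time $t-1$ but branching at time $t$ yields its limit along a \emph{disjoint} subsequence, and since the forward iterates also depend on $k$ through $\varrho^k$ and the incumbents $\overline{R}^{x,k-1}_t$, there is no reason for $R^{x,*}_{t-1}(\tilde\omega)$ to coincide with $R^{x,*}_{t-1}(\omega)$. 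Consequently the states $(R^{x,*}_{t-1}(\omega), I_t(\omega_t'))$ for $\omega_t'$ off the path $\omega$ are not shown to be reachable under your ``limiting greedy policy'' --- which, as constructed, is not even shown to be non-anticipative --- and the induction closes only for the step from $T$ to $T-1$, where Assumption \ref{as:1} gives $\overline{V}_T=V^*_T$ globally. The phrase ``on a common subsequence'' cannot repair this, since the events $\{\omega^k=\omega\}$ partition the iterations. To fix it you need the standard conditional-sampling argument: the subsequence realizing a stage-$(t-1)$ limit point must be chosen measurably with respect to the information available before $\omega_t$ is drawn, so that independence of the stage-$t$ sample guarantees every $\omega_t\in\Omega_t$ occurs infinitely often along it. The paper faces the same difficulty in a milder form (it only needs the ``bad'' branch to be sampled at some single late iteration, which follows from Assumption \ref{as:3}); your route requires the stronger statement and must make it explicit.
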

\begin{proof}{Proof:}
Let $\mathcal{\overline{V}}_t$ denote the set of all possible value function approximations\\
$\overline{V}^k_t,~t= 0,\dots,T$ that can be generated by the backward pass of Algorithm \ref{a:1}. 
Since according to Assumption \ref{as:2} we use only dual basic optimal solutions in the backward pass, by Lemma \ref{l:3} we know that the sets $\mathcal{\overline{V}}_t$ have finite cardinality for all $t=0,\dots,T$.
Thus, we know that as the algorithm progresses all the value function approximations $\overline{V}^k_t$ will eventually stabilize. Therefore, there exists an iteration index $k_1\in\N$ after which no updates can be made to the value functions $\overline{V}^k_t, ~t = 0,\dots,T$ for $k>k_1$. If the value functions $\overline{V}^{k_1}_t, ~t = 0,\dots,T$ are optimal for problem (\ref{eq:2}), then we are done.

Now, suppose that was not the case. Then there exists $t^\prime, ~0\leq t^\prime < T$, and $\omega^\prime\in\Omega$ such that for any $k>k_1$ we have 
$$ \min_{\mathclap{x_{t^\prime}\in\Xc_{t^\prime}(S_{t^\prime}(\omega^\prime))}}~ \big\{C(S_{t^\prime}(\omega^\prime), x_{t^\prime}) + {V}^{*}_{t^\prime}(R^x_{t^\prime}(\omega^\prime))\big\} >
\min_{\mathclap{x_{t^\prime}\in\Xc_{t^\prime}(S_{t^\prime}(\omega^\prime))}}~\big\{C(S_{t^\prime}(\omega^\prime), x_{t^\prime}) + \overline{V}^{k-1}_{t^\prime}(R^x_{t^\prime}(\omega^\prime))\big\}$$

Let us consider the set
\begin{equation}
\begin{split}
&\Delta = \Big\{\delta\in\R,\\
&\delta = \min_{\mathclap{x_t\in\Xc_t(S_t(\omega))}}~ \big\{C(S_t(\omega), x_t) + {V}^{*}_{t}(R^x_t(\omega))\big\} - 
\min_{\mathclap{x_t\in\Xc_t(S_t(\omega))}}~ \big\{C(S_t(\omega), x_t) + \overline{V}_{t}(R^x_t(\omega))\big\}: \\
&\min_{\mathclap{x_t\in\Xc_t(S_t(\omega))}}~ \big\{C(S_t(\omega), x_t) + {V}^{*}_{t}(R^x_t(\omega))\big\} >
\min_{\mathclap{x_t\in\Xc_t(S_t(\omega))}}~ \big\{C(S_t(\omega), x_t) + \overline{V}_{t}(R^x_t(\omega))\big\},\\
&\mbox{where } \omega\in\Omega,\mbox{ and } \overline{V}_t \in \mathcal{\overline{V}}_t, t=0,\dots, T\Big\}
\end{split} 
\end{equation}
Since the number of elements $\omega\in\Omega$ is finite, we know that the set $\Delta$ also has a finite number of elements. Thus, $\Delta$ has a minimum element, and we denote
$$\epsilon = \min\Delta.$$ 
Hence,
$$ \min_{\mathclap{x_{t^\prime}\in\Xc_{t^\prime}(S_{t^\prime}(\omega^\prime))}}~ \big\{C(S_{t^\prime}(\omega^\prime), x_{t^\prime}) + {V}^{*}_{t^\prime}(R^x_{t^\prime}(\omega^\prime))\big\} -
\min_{\mathclap{x_{t^\prime}\in\Xc_{t^\prime}(S_{t^\prime}(\omega^\prime))}}~ \big\{C(S_{t^\prime}(\omega^\prime), x_{t^\prime}) + \overline{V}^{k-1}_{t^\prime}(R^x_{t^\prime}(\omega^\prime))\big\} \geq \epsilon$$
And if $t^{\prime} > 0$ we know that
$$V^*_{t^\prime - 1}(R^x_{t^\prime - 1}) = \sum_{\omega_{t^\prime}\in\Omega_{t^\prime}}\Pb(\omega_{t^\prime})\min_{x_{t^\prime} \in \Xc(R^x_{t^\prime - 1}, I_t(\omega_{t^\prime}))}\big\{ C(S_{t^\prime}(\omega_{t^\prime}), x_{t^\prime}) + {V}^{*}_{t^\prime}(R^x_{t^\prime})\big\} $$
and using convexity,
$$\overline{V}^{k - 1}_{t^\prime - 1}(R^x_{t^\prime - 1}) \leq \sum_{\omega_{t^\prime}\in\Omega_{t^\prime}}\Pb(\omega_{t^\prime})\min_{x_t^\prime \in \Xc(R^x_{t^\prime - 1}, I_t(\omega_{t^\prime}))}\big\{ C(S_{t^\prime}(\omega_{t^\prime}), x_{t^\prime}) + \overline{V}^{k - 1}_{t^\prime}(R^x_{t^\prime})\big\}.$$
Therefore,
\begin{equation}
\begin{split}
&\min_{x_{t^\prime - 1}\in\Xc_{t^\prime - 1}(S_{t^\prime - 1}(\omega^\prime))}~ \big\{C(S_{t^\prime - 1}(\omega^\prime), x_{t^\prime - 1}) + {V}^{*}_{t^\prime - 1}(R^x_{t^\prime - 1}(\omega^\prime))\big\}\\
&>\min_{x_{t^\prime - 1}\in\Xc_{t^\prime - 1}(S_{t^\prime - 1}(\omega^\prime))}~ \big\{C(S_{t^\prime - 1}(\omega^\prime), x_{t^\prime - 1}) + \overline{V}^{k - 1}_{t^\prime - 1}(R^x_{t^\prime - 1}(\omega^\prime))\big\},
\end{split}
\end{equation}
which implies
\begin{equation}
\begin{split}
&\min_{x_{t^\prime - 1}\in\Xc_{t^\prime - 1}(S_{t^\prime - 1}(\omega^\prime))}~ \big\{C(S_{t^\prime - 1}(\omega^\prime), x_{t^\prime - 1}) + {V}^{*}_{t^\prime - 1}(R^x_{t^\prime - 1}(\omega^\prime))\big\}\\
&-\min_{x_{t^\prime - 1}\in\Xc_{t^\prime - 1}(S_{t^\prime - 1}(\omega^\prime))}~ \big\{C(S_{t^\prime - 1}(\omega^\prime), x_{t^\prime - 1}) + \overline{V}^{k - 1}_{t^\prime - 1}(R^x_{t^\prime - 1}(\omega^\prime))\big\}\geq \epsilon.
\end{split}
\end{equation}
Proceeding by backward induction, we know that
$$ \min_{x_{0}\in\Xc_{0}(S_{0})}~ \big\{C(S_{0}, x_{0}) + {V}^{*}_{0}(R^x_{0})\big\} -
\min_{x_{0}\in\Xc_{0}(S_{0})}~ \big\{C(S_{0}, x_{0}) + \overline{V}^{k - 1}_{0}(R^x_{0})\big\} \geq \epsilon. $$
Moreover, using Assumption \ref{as:5} we know that $R^x_t$ is bounded for each $t$. Hence, without loss of generality we can assume that $k$ is such that
$$\varrho^{k}\langle R^x_t - \overline{R}^{x,k-1}_t,Q_t (R^x_t - \overline{R}^{x, k-1}_t)\rangle < \epsilon,~ \mbox{ for } t = 0,\dots,T-1.$$
Hence, if we denote with $\widetilde{x}_0^{k}$ the solution to the following regularized problem, 
$$\widetilde{x}_0^{k}=  \argmin_{\mathclap{x_{0}\in\Xc_{0}(S_{0})}}~ \big\{C(S_{0}, x_{0}) + \overline{V}^{k - 1}_{0}(R^x_{0}(\omega^\prime)) + \varrho^{k}\langle R^x_0(\omega^\prime) - \overline{R}^{x,k - 1}_0,Q_0 (R^x_0(\omega^\prime) - \overline{R}^{x, k - 1}_0)\rangle \big\}$$
then we know that
$$\min_{\mathclap{x_{0}\in\Xc_{0}(S_{0})}}~ \big\{C(S_{0}, x_{0}) + {V}^{*}_{0}(R^x_{0})\big\} > C(S_{0}, \widetilde{x}_{0}^{k}) + \overline{V}^{k - 1}_{0}(R^{\widetilde{x},{k}}_0)+ \varrho^{k}\langle R^{\mathrlap{\widetilde{x},k}}_0 - \overline{R}^{{x,k-1}}_0,Q_0 (R^{\widetilde{x},k}_0 - \overline{R}^{\mathrlap{x, k - 1}}_0)\rangle$$
And since $Q$ is positive semi--definite, we know that
$$ \min_{x_{0}\in\Xc_{0}(S_{0})}~ \big\{C(S_{0}, x_{0}) + {V}^{*}_{0}(R^x_{0})\big\} > C(S_{0}, \widetilde{x}_{0}^{k}) + \overline{V}^{k-1}_{0}(R^{\widetilde{x},{k}}_0) $$
which implies,
$$ C(S_{0}, \widetilde{x}^{k}_0) + {V}^{*}_{0}(R^{\widetilde{x}, k}_{0}) > C(S_{0}, \widetilde{x}_{0}^{k}) + \overline{V}^{k - 1}_{0}(R^{\widetilde{x},{k}}_0).$$
and therefore
$${V}^{*}_{0}(R^{\widetilde{x}, k}_{0}) > \overline{V}^{k - 1}_{0}(R^{\widetilde{x},{k}}_0).$$
Thus we know that the value function approximation $\overline{V}^{k-1}_0(\cdot)$ is suboptimal at the point $R^{\widetilde{x}, k}_0$ corresponding to $\widetilde{x}^{k}_0$.
Hence, if the value function $V^{k-1}_1(\cdot)$ is such that for each $\omega_1\in\Omega_1$ the following holds, 
$$\min_{\mathrlap{x_{1} \in \Xc(R^{\widetilde{x}, k}_{0}, I_1(\omega_{1}))}}\big\{ C(S_{1}(\omega_{1}), x_{1}) + \overline{V}^{k-1}_{1}(R^x_{1})\big\} = \min_{\mathrlap{x_{1} \in \Xc(R^{\widetilde{x}, k}_{0}, I_1(\omega_{1}))}}\big\{ C(S_{1}(\omega_{1}), x_{1}) + {V}^{*}_{1}(R^x_{1})\big\}  $$
then the backward pass will result in an updated value function $\overline{V}^k_0(\cdot)$ such that $\overline{V}^{k}_0(R^{\widetilde{x}, k}_{0}) = V^*_0(R^{\widetilde{x}, k}_{0}) > \overline{V}^{k-1}_0(R^{\widetilde{x}, k}_{0}) $ which is a contradiction with the choice of $k$. Therefore, it must be the case that there exists $\omega_1^\dprime\in\Omega_1$ such that 
$$\min_{\mathrlap{x_{1} \in \Xc(R^{\widetilde{x}, k}_{0}, I_1(\omega_{1}^\dprime))}}\big\{ C(S_{1}(\omega_{1}^\dprime), x_{1}) + \overline{V}^{k-1}_{1}(R^x_{1})\big\} < \min_{\mathrlap{x_{1} \in \Xc(R^{\widetilde{x}, k}_{0}, I_1(\omega_{1}^\dprime))}}\big\{ C(S_{1}(\omega_{1}^\dprime), x_{1}) + {V}^{*}_{1}(R^x_{1})\big\}.$$
Moreover, 
$$\min_{x_{T} \in \Xc(S_T(\omega))}\big\{ C(S_{T}(\omega), x_{T}) + \overline{V}^{k-1}_{T}(R^x_{T})\big\} = \min_{x_{T} \in \Xc(S_T(\omega))}\big\{ C(S_{T}(\omega), x_{T}) + {V}^{*}_{T}(R^x_{T})\big\}.$$
Therefore, there exists a sample path $\omega^\dprime\in\Omega$ and a time index $t^\dprime,~0< t^\dprime < T$ such that the sequence of regularized solutions $\widetilde{x}_{t}^k(\omega^\dprime)$ would result in a suboptimal value function evaluation at $t^\dprime$, 
$$\min_{\mathrlap{x_{t^\dprime} \in \Xc(R^{\widetilde{x}, k}_{t^\dprime - 1}, I_{t^\dprime}(\omega^\dprime))}}\big\{ C(S_{t^\dprime}(\omega^\dprime), x_{t^\dprime}) + \overline{V}^{k-1}_{t^\dprime}(R^x_{t^\dprime})\big\} < \min_{\mathrlap{x_{t^\dprime} \in \Xc(R^{\widetilde{x}, k}_{t^\dprime - 1}, I_{t^\dprime}(\omega^\dprime))}}\big\{ C(S_{t^\dprime}(\omega), x_{t^\dprime}) + {V}^{*}_{t^\dprime}(R^x_{t^\dprime})\big\},$$
and optimal evaluations at $t^\dprime + 1$ for all possible $\omega_{t^\dprime+1}\in\Omega_{t^\dprime + 1}$,
\begin{equation*}
\begin{split}
\min_{x_{t^\dprime + 1} \in \Xc(R^{\widetilde{x}, k}_{t^\dprime}, I_{t^\dprime + 1}(\omega_{t_\dprime + 1}))}\big\{ C(S_{t^\dprime + 1}(\omega_{t^\dprime+1}), x_{t^\dprime + 1}) + \overline{V}^{k-1}_{t^\dprime + 1}(R^x_{t^\dprime + 1})\big\}\\
 = \min_{x_{t^\dprime + 1} \in \Xc(R^{\widetilde{x}, k}_{t^{\dprime}}, I_{t^\dprime + 1}(\omega_{t^\dprime+1}))}\big\{ C(S_{t^\dprime + 1}(\omega_{t^\dprime + 1}), x_{t^\dprime + 1}) + {V}^{*}_{t^\dprime + 1}(R^x_{t^\dprime + 1})\big\}.
\end{split}
\end{equation*}
Hence the backward pass of iteration $k$ will result in an updated value function approximation $\overline{V}^k_{t^\dprime}(\cdot)$ such that
$$\overline{V}^{k}_{t^\dprime}(R^{\widetilde{x}, k}_{t^\dprime}) = V^*_{t^\dprime}(R^{\widetilde{x}, k}_{t^\dprime}) > \overline{V}^{k-1}_{t^\dprime}(R^{\widetilde{x}, k}_{t^\dprime}),$$
which is a contradiction with the choice of $k$. This completes the proof.

\begin{figure}[H]
\begin{center}
\begin{tikzpicture}[baseline]
\pgfplotsset{every axis legend/.append style={at={(0.5,-0.25)},anchor=north, legend columns=2}}
\tikzset{
every pin/.style={fill=black!20!white,rectangle,rounded corners=3pt,font=\normalsize},
small dot/.style={fill=black,circle,scale=0.4}
}
\begin{axis}[
height=2.3in,
width=2.3in,
title={Forward Pass},
grid=major,
xtick = {},
ytick = {},
xmin=-1,
xmax=2,
ymin=0,
ymax=5.2,
xlabel={$R^x_{t^\prime}$},
ylabel={Value (\$)}]
\addplot[color=black, dashed, line width = 2] coordinates {
 (-1, 5) (-0.5, 2) (1, 2) (1.5, 5)
}
coordinate [pos=0.56] (C)
coordinate [pos=0.6] (B)
;
\addlegendentry{$\overline{V}^{k-1}_{t^\dprime}(R^x_{t^\dprime})$}
\addplot[color=blue, line width = 1] coordinates {
 (-1, 5) (-0.5, 2) (0.5, 2) (1.167, 3.167) (1.5, 5)
}
coordinate [pos=0.7] (A)
;
\addlegendentry{$V^*_{t^\dprime}(R^x_{t^\dprime}) $}
\node[small dot, pin=-120:{$R^{\tilde{x},k}_{t^\dprime}(\omega^\dprime)$}] at (170,200) {};
\draw[-stealth] (A) -| (B);
\draw[-stealth] (C) -| (B);
\end{axis}
\end{tikzpicture}
\begin{tikzpicture}[baseline]
\pgfplotsset{every axis legend/.append style={at={(0.5,-0.25)},anchor=north, legend columns=2}}
\tikzset{
every pin/.style={fill=black!20!white,rectangle,rounded corners=3pt,font=\normalsize},
small dot/.style={fill=black,circle,scale=0.4}
}
\begin{axis}[
height=2.3in,
width=2.3in,
title={Backward Pass},
grid=major,
xtick = {},
ytick = {},
xmin=-1,
xmax=2,
ymin=0,
ymax=5.2,
xlabel={$R^x_{t^\prime}$},
ylabel={Value (\$)}]
\addplot[color=black, dashed, line width = 2] coordinates {
 (-1, 5) (-0.5, 2) (0.5, 2) (1.167, 3.167) (1.5, 5)
}
coordinate [pos=0.56] (C)
coordinate [pos=0.6] (B)
;
\addlegendentry{$\overline{V}^{k}_{t^\dprime}(R^x_{t^\dprime})$}
\addplot[color=blue, line width = 1] coordinates {
 (-1, 5) (-0.5, 2) (0.5, 2) (1.167, 3.167) (1.5, 5)
}
coordinate [pos=0.7] (A)
;
\addlegendentry{$V^*_{t^\dprime}(R^x_{t^\dprime}) $}
\end{axis}

\end{tikzpicture}
\caption{Value function update at iteration $k$.}
\label{fig:proof}
\end{center}
\end{figure}
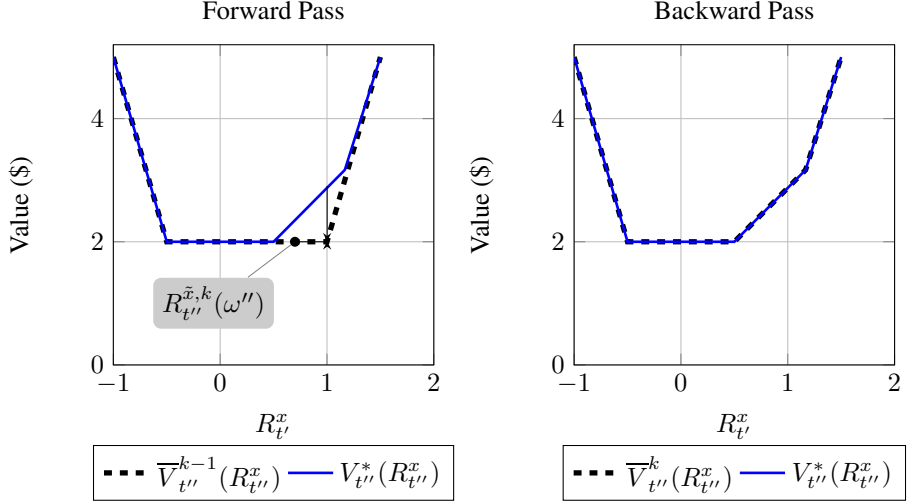
\end{proof}
\section{Beyond Stagewise Independence}
\label{s:5}
Despite its advantages, the SDDP methodology has one crucial drawback.
The stagewise independence of $W_t = (A_t, B_t, b_t, c_t)$ will generally not hold in practice since real--world multistage problems often involve
stochastic processes that exhibit some degree of temporal dependence. There are different approaches that we can adopt to address this difficulty.
First, let us consider the special case when the history dependence occurs only in the right hand side constraint vectors $b_t$, and it has the following autoregressive structure:
\begin{equation}
\label{eq:ar}
b_t = \sum_{t^\prime = 1}^{t-1} \left(\Phi_{t,t^\prime} b_{t^\prime} + \Psi_{t,t^\prime}\eta_{t^\prime}\right) + \eta_t
\end{equation}
where the process $(A_t, B_t, c_t, \eta_t)$ is stagewise independent and the deterministic matrices $\Phi_{t, t^\prime}$ and $\Psi_{t,t^\prime}$ contain the autoregressive information.
Then, for each time period $t > 0$ in the SDDP formulation, we can extend the original optimization problem
with additional variables to accommodate the realizations of $b_{t^\prime}$ and $\eta^{t^\prime}, t^\prime < t$ that are necessary to model the autoregressive dependence (see \cite{chiralaksanakul2004assessing, maceira2006use}, and \cite{shapiro2013risk}).
The advantage of such a solution to the history dependence problem is that stagewise independence is present in the extended formulation.
A drawback of the approach is that the dimension of the state space also increases from $|R^x_t|$ (in the stagewise independence case) to possibly as much as
$|R^x_t| + \sum_{t^\prime = 0}^{t-1}(|b_{t^\prime}| + |\eta_{t^\prime}|)$ (in the history dependent case), which implies a slower convergence rate (note that we can omit terms $|b_{t^\prime}|$ if $\Phi_{t, t^\prime} = \mathbf{0}$, and $|\eta_{t^\prime}|$ if $\Psi_{t,t^\prime}=\mathbf{0}$). This problem can be alleviated with the use of cut sharing strategies as described in \cite{infanger1996cut}, and \cite{de2013sharing}.

%
In the remainder of this section we consider an alternative setup that leads to an increase in the information dimension rather than the resource dimension.
We assume that the stochastic process $W_t$ is a discrete state Markov chain.
Thus, the probability of occurrence of $\omega_{t+1}\in\Omega_{t+1}$ depends only on the current realization $\omega_t\in\Omega_t$ or the current post--decision information state $I_t^x$,
\begin{equation}
\label{eq:14}
\Pb(\omega_{t+1}|H_{t}) = \left\{
  \begin{array}{lr}
  \Pb(\omega_{t+1}|S_t) = \Pb(\omega_{t+1}|I_t^x), &\mbox{ if } t = 0\\
  \Pb(\omega_{t+1}|\omega_t) = \Pb(\omega_{t+1}|I_t^x), &\mbox{ if } t > 0.
  \end{array}\right.
\end{equation}
Such an approach can be suitable for problems where the process $(A_t,B_t,c_t)$ is not stagewise independent, or the autoregressive model (\ref{eq:ar}) does not constitute a good fit
to the observed realizations of the random process.
For example, historical weather data might indicate the presence of distinct patterns that cannot be explained with a normal error distribution around a given mean (which arise in autoregressive estimation). Alternatively, the relevant information state could be the forecast of the highest temperature tomorrow.

To properly model such weather dynamics one might need to consider different weather regimes that are inherently distinct.
Thus, multiple approximations of the value functions need to be employed, which increases the size of the optimization problem.
That leads to greater computational requirements for solving the problem as a distinct recourse function approximation needs
to be constructed for every $I_t^x\in\Ic_t^x$, where $\Ic_t^x$ denotes the set of all possible post--decision information states at time $t$. 
Hence, we need to maintain $|\Ic_t^x(\Omega_t)|$ sets of cuts for each time period $t=0,\dots, T$, and therefore the approach is suitable for problems
where the cardinality of the possible post--decision information states $|\Ic_t^x(\Omega_t)|$ is small,
or alternatively when the cardinality of the sample sets $|\Omega_t|$ is small.
However, unlike the case of an autoregressive fit (\ref{eq:ar}), the dimension of the post--decision resource state is preserved in each set of cuts,
and the corresponding exponential increase in the computational time is avoided.

In the forward pass at iteration $k$, we consider a sample path $\omega = (\omega_1,\dots,\omega_T)$ that is generated using (\ref{eq:14}).
At each time step $t=0,\dots, T-1$ the piecewise--linear value function $V^{k-1}_t(R^x_t, I_t^x(\omega))$
is used to approximate the optimal value function $V^*_t(R^x_t, I^x_t(\omega))$ at the current post--decision information state $I^x_t(\omega)$.

In the backward pass of the algorithm at iteration $k$,
we consider $t=T,\dots, 1$ and generate the cutting hyperplanes $ h_{t-1}^k(R^x_{t-1}, I_{t-1}^x)$ for each $I_{t-1}^x\in\Ic_{t-1}^x$.
Please note that if the random process $W_t$ is a finite state Markov chain,
then $|\Ic_t^x(\Omega_t)| \leq |\Omega_t|, t=0,\dots, T$.
We employ the conditional probabilities $\Pb(\omega_t|I_{t-1}^x)$ to construct constant intercepts and slopes,
$$\displaystyle \alpha^k_{t-1}(I_{t-1}^x)\gets \sum_{\omega_t\in\Omega_t} \Pb(\omega_t|I_{t-1}^x)\underline{V}^k_t(R^{x,k}_t, \omega_t)$$
and,
$$\displaystyle \beta^k_{t-1}(I_{t-1}^x) \gets \sum_{\omega_t\in\Omega_t} \Pb(\omega_t|I_{t-1}^x) \underline{\beta}_{t}^k(\omega_t).$$
Thus,
$$ h_{t-1}^k(R^{x}_{t-1}, I_{t-1}^x) :=  \alpha^k_{t-1}(I_{t-1}^x) + \langle\beta^k_{t-1}(I_{t-1}^x), R^x_{t-1} - R^{x,k}_{t-1}\rangle.$$
Hence, we can construct the new value function approximation $\overline{V}^k_{t-1}(R^x_{t-1}, I^x_{t-1}) $ for the post--decision information state $ I_{t-1}^x$ as,
\begin{equation}
\label{eq:vfam}
\overline{V}_{t-1}^{k}(R^x_{t-1}, I_{t-1}^x) := \max\big\{\overline{V}_{t-1}^{k-1}(R^x_{t-1},I_{t-1}^x),~h_{t-1}^k(R^x_{t-1}, I_{t-1}^x)\big\}.
\end{equation}
The description of the method is given in Algorithm \ref{a:reg-sddp-markov}.
\begin{algorithm}
	\caption{Quadratic Regularization Method for Markov Models}
\footnotesize
\begin{algorithmic}[1]
\STATE Choose $Q_t\succeq 0, t=0,\dots,T$, and define the sequence $\{\varrho^k\}$.
\STATE Define $\overline{V}_{T}^k(R^x_T, I_T^x):= V^*_T(R^x_T, I_T^x),~ k=0,\dots, K,~I_T^x\in\Ic_T^x$.
\STATE Define $\overline{V}_{t}^0(R^x_t, I_t(\omega_t)):=-\infty,~\omega_t\in\Omega_t,~ t=0,\dots, T-1$.
\STATE $(R^{x,k}_{-1}, I_0) \gets S_0,~ k = 0,\dots, K$
\FOR{$k=0,\dots, K$}
\STATE Sample $\omega\in\Omega$ using the Markov stochastic process $\{W_t, t=1,\dots,T\}$.
\FOR{$t=0,\dots,T$}
\IF{$(k = 0)$}
\STATE
\begin{equation*}
 \mbox{Select } x^k_t \in 
 \argmin_{x_t\in\Xc_t(R^{x,k}_{t-1}, I_t(\omega))}~ \left\{C(S_t(\omega), x_t)
 \right\}
\end{equation*}

\ELSE
\IF{$t < T$}
\STATE

\begin{equation*}
 x^k_t \in 
 \argmin_{x_t\in\Xc_t(R^{x,k}_{t-1}, I_t(\omega))}~\left\{ C(S_t(\omega), x_t) + \overline{V}^{k-1}_{t}(R^x_t, I^x_t(\omega)) +
      \frac{\varrho^k}{2} \Big\langle R^x_t - \overline{R}^{x,k-1}_t, Q_t(R^x_t - \overline{R}^{x,k-1}_t)\Big\rangle 
 \right\}
 \end{equation*}


\ELSE
\STATE

\begin{equation*}
 x^k_t \in 
 \argmin_{x_t\in\Xc_t(R^{x,k}_{t-1}, I_t(\omega))}~ \left\{C(S_t(\omega), x_t) + \overline{V}^{k-1}_{t}(R^x_t, I^x_t(\omega)) 
 \right\}
 \end{equation*}


\ENDIF
\ENDIF
\STATE $\mbox{Set } R_{t}^{x,k} \gets B_t^k x_t^k $; $S_{t+1}(\omega) \gets (R_t^{x,k} - b_{t+1}(\omega), I_{t+1}(\omega))$

\ENDFOR
\FOR{$t=T,\dots, 1$}
\STATE \begin{equation*}
  \mbox{Define } \underline{V}^k_t(R^{x}_{t-1}, \omega_t) := 
  \min_{x_t\in\Xc_t(R^{x}_{t-1}, I_t(\omega_t))}~ \left\{C(S_t(\omega_t), x_t) + \overline{V}^k_{t}(R^x_t, I^x_t(\omega_t))
  \right\} 
  \end{equation*}
  \FORALL{$\omega_{t}\in\Omega_{t}$}
\STATE \begin{equation*}
  \mbox{Select }\underline{\beta}^k_t(\omega_t) \in \partial_{R^x_{t-1}} \underline{V}^k_t(R_{t-1}^{x,k}, \omega_t) 
  \end{equation*}
  \ENDFOR
  \FORALL{$I_{t-1}^x\in \Ic_{t-1}^x(\Omega_{t-1})$}
  \STATE $\displaystyle \alpha^k_{t-1}(I_{t-1}^x)\gets \sum_{\omega_t\in\Omega_t} \Pb(\omega_t|I_{t-1}^x)\underline{V}^k_t(R^{x,k}_t, \omega_t)$; $\displaystyle \beta^k_{t-1}(I_{t-1}^x) \gets \sum_{\omega_t\in\Omega_t} \Pb(\omega_t|I_{t-1}^x) \underline{\beta}_{t}^k(\omega_t)$
  \STATE $\displaystyle h_{t-1}^k(R^{x}_{t-1}, I_{t-1}^x) :=  \alpha^k_{t-1}(I_{t-1}^x) + \langle\beta^k_{t-1}(I_{t-1}^x), R^x_{t-1} - R^{x,k}_{t-1}\rangle$
  \STATE $\displaystyle \overline{V}_{t-1}^{k}(R^x_{t-1}, I_{t-1}^x) := \max\big\{\overline{V}_{t-1}^{k-1}(R^x_{t-1},I_{t-1}^x),~h_{t-1}^k(R^x_{t-1}, I_{t-1}^x)\big\}$
  \ENDFOR
\ENDFOR
\STATE
  \begin{fleqn} \begin{equation*}
  \underline{V}^k_0 \gets \left\{
  \begin{split}
  \min_{x_0\in\Xc_0(S_0)}~&C(S_0, x_0) + \overline{V}^k_{0}(R^x_0, I^x_0)\\
  \end{split}
  \right\} 
  \end{equation*}
  \end{fleqn}
\STATE $\overline{R}^{x,k}_t \gets R^{x,k}_{t},~t=0,\dots, T-1$
\ENDFOR
\end{algorithmic}
\label{a:reg-sddp-markov}
\label{a:2}
\end{algorithm}
\normalsize

\begin{theorem} Suppose that $\{W_t, t=1,\dots,T\}$ is a discrete Markov process as described by equation (\ref{eq:14}). If  $\overline{V}_T^k(\cdot, I_T^x) \equiv V^*_T(\cdot, I_T^x),~\mbox{for }I_T^x\in\Ic^x_T,~k=0,\dots,K$, and  conditions 2, 3, 4, and 5 specified in Theorem \ref{t:7} are satisfied,
\label{t:9}
then the method presented in Algorithm \ref{a:2} converges to an optimal policy of problem (\ref{eq:2}) after a finite number of iterations
with probability one.
\end{theorem}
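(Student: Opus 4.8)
The plan is to follow the proof of Theorem~\ref{t:7} almost line by line, carrying the post--decision information state $I^x_t$ as an extra argument on every value function and replacing the weights $\Pb(\omega_t)$ by the Markov transition probabilities $\Pb(\omega_t\mid I^x_{t-1})$ of~(\ref{eq:14}). First I would upgrade Lemma~\ref{l:3}: for a finite--state Markov chain $|\Ic^x_t(\Omega_t)|\le|\Omega_t|<\infty$, and for each fixed $I^x_t$ the family of cut collections $\overline{V}_t(\cdot,I^x_t)$ generated from dual basic solutions is finite by Lemma~\ref{l:3}; hence the set $\mathcal{\overline{V}}_t$ of all approximations $\overline{V}^k_t(\cdot,\cdot)$ that the backward pass of Algorithm~\ref{a:2} can produce is finite for every $t$. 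Therefore there is an iteration index $k_1$ after which no $\overline{V}^k_t$, $t=0,\dots,T$, ever changes again; if these stabilized approximations satisfy the optimality condition~(\ref{eq:8}) read with the information--state argument, for every $\omega\in\Omega$, we are done.

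Suppose not. Then, exactly as in Theorem~\ref{t:7}, there are $t'$ with $0\le t'<T$ and $\omega'\in\Omega$ with a persistent positive gap between the true and the approximate evaluation at $S_{t'}(\omega')$ for all $k>k_1$. Since $\Omega$ and the $\mathcal{\overline{V}}_t$ are finite, the set $\Delta$ of all attainable positive evaluation gaps over states $(R^x_t(\omega),I^x_t(\omega))$ and approximations $\overline{V}_t\in\mathcal{\overline{V}}_t$ is finite, so $\epsilon:=\min\Delta>0$. I would then propagate this gap backward along $\omega'$; the only change from Theorem~\ref{t:7} is that the dynamic programming recursion now reads
\[
V^*_{t-1}(R^x_{t-1},I^x_{t-1})=\sum_{\omega_t\in\Omega_t}\Pb(\omega_t\mid I^x_{t-1})\min_{x_t\in\Xc_t(R^x_{t-1},I_t(\omega_t))}\{C(S_t(\omega_t),x_t)+V^*_t(R^x_t,I^x_t(\omega_t))\},
\]
with the matching convex lower bound for $\overline{V}^{k-1}_{t-1}(R^x_{t-1},I^x_{t-1})$ holding because the cut produced at $(R^{x,k}_{t-1},I^x_{t-1})$ in~(\ref{eq:vfam}) underestimates that recursion. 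This yields a gap of at least $\epsilon$ at time $0$ along $\omega'$.

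By condition~5 of Theorem~\ref{t:7} the resource states stay in a bounded set, so by condition~4 there is $k_2\ge k_1$ with $\varrho^k\langle R^x_t-\overline{R}^{x,k-1}_t,Q_t(R^x_t-\overline{R}^{x,k-1}_t)\rangle<\epsilon$ for all $t$ and $k\ge k_2$; since $Q_t\succeq0$, the regularized time--$0$ solution $\widetilde{x}^k_0$ of Algorithm~\ref{a:2} then satisfies $\overline{V}^{k-1}_0(R^{\widetilde{x},k}_0,I^x_0)<V^*_0(R^{\widetilde{x},k}_0,I^x_0)$, i.e.\ the stabilized approximation is strictly suboptimal at the visited post--decision state. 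Because $\overline{V}^k_T(\cdot,\cdot)\equiv V^*_T(\cdot,\cdot)$, repeatedly applying this reasoning forward along the regularized trajectory produces a sample path $\omega''$ and a transition time $t''$, $0<t''<T$, at which the forward evaluation is strictly suboptimal while, for every $\omega_{t''+1}\in\Omega_{t''+1}$, the time--$(t''+1)$ evaluation at $R^{\widetilde{x},k}_{t''}$ is optimal. The backward--pass cut built for $I^x_{t''}(\omega'')$ then has intercept $\alpha^k_{t''}(I^x_{t''}(\omega''))=\sum_{\omega_{t''+1}}\Pb(\omega_{t''+1}\mid I^x_{t''}(\omega''))\,\underline{V}^k_{t''+1}(R^{x,k}_{t''},\omega_{t''+1})=V^*_{t''}(R^{\widetilde{x},k}_{t''},I^x_{t''}(\omega''))$, since the $\underline{V}^k_{t''+1}$ attain the true optimal values and the intercept averages them with the exact conditional weights; hence $\overline{V}^k_{t''}(R^{\widetilde{x},k}_{t''},I^x_{t''}(\omega''))=V^*_{t''}(R^{\widetilde{x},k}_{t''},I^x_{t''}(\omega''))>\overline{V}^{k-1}_{t''}(R^{\widetilde{x},k}_{t''},I^x_{t''}(\omega''))$, contradicting $k>k_1$.

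The step I expect to require the most care is making that final contradiction unconditional: in the Markov case the regularized forward trajectory genuinely branches with the sampled path, so the resource point $R^{\widetilde{x},k}_{t''}$ and the information state $I^x_{t''}(\omega'')$ whose cuts get refreshed depend on the draw at iteration $k$, and the argument above only bites at iterations that actually realize $\omega''$. I would close this with condition~3 of Theorem~\ref{t:7}: by~(\ref{eq:14}) every sample path, hence every transition, has strictly positive probability; there are only finitely many paths; and the ``bad'' path is determined by the first $k-1$ iterations while iteration $k$'s draw is independent of them, so a conditional Borel--Cantelli argument gives that it is realized at infinitely many $k\ge k_2$ with probability one, and choosing one such $k$ forces the stabilized approximations to be optimal for problem~(\ref{eq:2}). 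Everything else — the backward induction for the $\epsilon$--gap, the inequality relating the regularized and unregularized time--$0$ minima, and the forward recursion identifying $t''$ — is a verbatim adaptation of the corresponding passages in the proof of Theorem~\ref{t:7}, now carrying the index $I^x_t$ and the conditional probabilities $\Pb(\cdot\mid I^x_{t-1})$.
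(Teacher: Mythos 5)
Your proposal is correct and follows essentially the same route as the paper: the paper's own proof of Theorem~\ref{t:9} is a two--sentence remark that the argument of Theorem~\ref{t:7} carries over once each value function is indexed by the finitely many post--decision information states $I^x_t$, and your write--up is precisely that adaptation carried out in detail (finiteness of the cut collections per information state, the conditional--probability form of the dynamic programming recursion, and the backward/forward contradiction). Your closing Borel--Cantelli--style justification of the ``with probability one'' clause is in fact more explicit than anything in the paper, but it is a refinement of the same argument, not a different approach.
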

\begin{proof}{Proof:}
The proof is analogous to the proof of Theorem \ref{t:7}. The main difference is that for each time period $t=1,\dots,T$ we need to consider $|I^x_t|$ different value functions
$\overline{V}_t^k(R^x_t, I^x_t)$. Since $|I^x_t|$ is finite, the argument of the proof of Theorem \ref{t:7} can be extended to show that with probability 1, there exists a large enough $k\in\N$ such that the value functions
$\overline{V}_t^k(R^{\hat{x},k}_t(\omega), I^x_t)$ are optimal for all $\omega\in\Omega$, and $I^x_t \in \Ic^x_t, t=0,\dots,T$.
\end{proof}
\begin{remark}
Various optimization methods for Markov models have been studied in the literature for both the risk--neutral (see \cite{puterman2014markov, powell2007approximate, shapiro2014lectures}) and risk--averse cases
(see \cite{PhilpottDeMatos, ruszczynski2010risk, mo2001integrated} and the references within).
An extensive treatment of optimization problems with Markov uncertainty is beyond the scope of this work.
The goal of our presentation is the introduction of regularization into the Markovian setting, so that it can be adapted to other problems on a case--by--case basis.
\end{remark}

\section{Algorithmic Tuning}
\label{s:tuning}
In order to turn mathematical arguments into useful numerical results one needs to employ a high quality implementation and suitable parameter tuning.
In this section we present some of the potential issues regarding the reliability and computational performance of the methods presented above.
We consider the construction of regularization sequences, and discuss numerical concerns regarding the solutions of subproblems.
\subsection{Regularization Coefficients}
\label{s:reg_coeff}
In general, we cannot find a regularization sequence that would lead to the fastest possible convergence.
However, if we consider sequences that are defined by a set of parameters, then we can attempt to find suitable parameter values.
For example, we can construct regularization sequences $\varrho^k \geq 0$ such that $\displaystyle \lim_{k\rightarrow\infty} \varrho^k= 0$ by using the following geometric sequence.  Given $\varrho^0>0$ and $r\in(0,1)$, we define
\begin{equation}
\varrho^k = \varrho^0 r^{k} = r \cdot \varrho^{k-1} \mbox{, if } k > 0.
\end{equation}
In this case, we need to tune the parameters $\varrho^0$ and $r$. We can gain insight by solving a small instance of the given problem for different pairs $(\varrho^0, r)$.
For example, in section \ref{s:7} we describe an optimization model to be solved for high--dimensional post--decision resource states $|R^x_t|\geq 50$.
As a pre--processing step, we can solve a smaller instance, e.g. $|R^x_t| = 25$, for each $(\varrho^0,r)\in \{1,10,100\}\times\{0.9,0.95,0.99\}$,
and compare the results. Since estimates of the upper bounds and optimality gaps are stochastic, we prefer to compare only the deterministic lower bounds as they are more reliable.
The resulting plots can be found in Figure \ref{fig:1}. We can see that the sequences of regularization coefficients has an impact on the behavior of the proposed methods.
However, various choices of $(\varrho^0, r)$ can be used with similar success. In our experiments in section \ref{s:7}, we use $\varrho^0 = 1,~r=0.95$.
\input{test25x10-combined3.tex}

\subsection{Convergence Tolerance for the Solution of Subproblems}
At each step of the forward and backward pass of Algorithm \ref{a:reg-sddp} and Algorithm \ref{a:reg-sddp-markov},
we use the current collection of hyperplanes $\big\{\alpha^j_{t} + \langle\beta^j_{t}, R^x_t - R^{x,j}_t\rangle,~ j\leq k\big\}$ and a realization of $W_t = (A_t, B_t, b_t, c_t)$ as a part of the input to a convex optimization problems having the following general form,
\begin{equation}
\label{eq:21}
\begin{split}
\min~ &\langle c,y\rangle + \frac{1}{2}\langle y,Qy\rangle\\
\mbox{s.t. } &Ay=b\\
&y\geq 0
\end{split}
\end{equation}
The numerical precision of the solutions to subproblems (\ref{eq:21}) is essential for the correctness of the resulting policy for problem (\ref{eq:1}).
However, the right--hand side vector $b$ of problem (\ref{eq:21}) includes the vector $b_t$ and the constant terms $\alpha_t^j - \beta^j_t R^{x,j}_t$ of the value function approximations given in (\ref{eq:vfa}) or (\ref{eq:vfam}).
If problem (\ref{eq:1}) has a long time horizon, then an aggregation of constant terms with large modulus $|\alpha_t^j  - \beta^j_t R^{x,j}_t|$ can occur,
and that could lead to numerical solutions of problem (\ref{eq:21}) which do not satisfy the system of constraints $B_{t-1}x_{t-1} + A_t x_t = b_t$ with a desirable precision.
Convex optimization tools, including specialized algorithms for linear and quadratic programming problems, often use convergence tolerance parameters to guide their stopping conditions.
For problems with long time horizons, we encounter numerical precision problems that require that we use care in setting tolerance parameters for stopping conditions.  In the sections below, we discuss the issues of relative primal feasibility, and the relative complementarity gap.
\subsubsection{Relative primal feasibility}
Suppose that a given optimization solver has a feasibility condition of the following form,
\begin{equation}
\frac{||Ay - b||}{1+||b||} \leq \varepsilon_f.
\end{equation}
We can consider two right--hand side vectors $b^1, b^2$ such that $||b^1|| < ||b^2||$ and corresponding candidate solutions $y^1,y^2$ such that
$\displaystyle\frac{||Ay^1 - b^1||}{1+||b^1||} = \frac{||Ay^2 - b^2||}{1+||b^2||} = \varepsilon_f$.\\
Then the feasibility errors satisfy $\displaystyle||Ay^1 - b^1|| < ||Ay^2 - b^2||$.
Therefore, if we keep the primal feasibility tolerance $\varepsilon_f$ fixed while $||b||$ grows, then the feasibility errors $||Ay - b||$ (and therefore $||B_{t-1}x_{t-1} + A_tx_t - b_t||$) could increase as well.
Hence, for problems with a long time horizon or a large number of hyperplanes in the value function approximation,
one might need to decrease the tolerance $\varepsilon_f$ for problem (\ref{eq:21}) in order to bring the size of the error $||B_{t-1}x_{t-1} + A_tx_t - b_t||$ down to an acceptable level.

\subsubsection{Relative complementarity gap}
Commercial solvers often include implementations of primal--dual interior point methods (see \cite{wright1997primal, benson2009interior}) that employ a relative complementarity tolerance $\varepsilon_c$ in their stopping condition.
The presence of large (by modulus) constant terms in the right--hand side vector $b$ can also lead the numerical solver to terminate at an infeasible solution with non--negligible errors $||Ay-b||$ and
$||B_{t-1}x_{t-1} + A_tx_t - b_t||$,
if $\varepsilon_c$ is not chosen appropriately. We present a brief explanation below.\\
The Lagrangian of problem (\ref{eq:21}) is given by
\begin{equation}
L(y,\mu,\lambda) = \langle c,y\rangle + \frac{1}{2}\langle y,Qy\rangle + \langle \mu, b-Ay\rangle - \langle\lambda,y\rangle.
\end{equation}
Hence, the Karush--Kuhn--Tucker conditions for problem (\ref{eq:21}) are given by the system of constraints,
\begin{equation}
\label{eq:25}
\begin{split}
Ay &= b\\
A^{\top}\mu - Qy + \lambda &= c\\
Y\Lambda\mathbf{1} &= 0\\
y,\lambda &\geq 0
\end{split}
\end{equation}
where $Y = \mbox{diag}(y)$ and $\Lambda = \mbox{diag}(\lambda)$.

Interior point methods construct iterative approximations to the solution of (\ref{eq:25})  using a sequence of scalar barrier parameters
$\nu^n > 0$, such that $\nu^n\downarrow 0$.
Assuming that the initial point $(y^0,\mu^0, \lambda^0)$ is infeasible for (\ref{eq:25}) and $\langle  y^0, \lambda^0 \rangle > 0$, we can have a stopping condition for the complementarity gap
such as
\begin{equation}
\label{eq:26}
\frac{\langle y^n, \lambda^n\rangle }{\langle y^0, \lambda^0 \rangle} \leq \varepsilon_c ~~\mbox{  or  }~~ \nu^n \leq \varepsilon_c ~~\mbox{  or  }~~\displaystyle \frac{\nu^n}{|\langle c,y^n\rangle + \langle y^n,Qy^n\rangle|} \leq \varepsilon_c.
\end{equation}
At iteration $n$, the interior point method finds a Newton direction $(\Delta y,\Delta \mu, \Delta \lambda)$ for problem (\ref{eq:25}) as the solution to the following system :
\begin{equation}
\begin{bmatrix}
A & 0 & 0\\
-Q & A^\top & I\\
\Lambda & 0 & Y
\end{bmatrix}\cdot
\left[\begin{array}{c}
\Delta y\\
\Delta \mu\\
\Delta \lambda
\end{array}\right] = \left[
\begin{array}{c}
b-Ay^n\\
c-A^{\top}\mu^n + Qy^n + \lambda^n\\
\nu^n\mathbf{1} - Y^n\Lambda^n\mathbf{1}
\end{array}
\right]
\end{equation}
where $I=\mbox{diag}(1,1,\dots,1)$ denotes the identity matrix.\\
Then the current solution $(y^n, \mu^n,\lambda^n)$ can be updated by choosing $\gamma^n \in (0, 1]$ such that
\begin{equation}
(y^n, \lambda^n) + \gamma^n(\Delta y, \Delta\lambda)\geq 0
\end{equation}
and setting
\begin{equation}
(y^{n+1}, \mu^{n+1}, \lambda^{n+1}) = (y^{n}, \mu^{n}, \lambda^{n}) + \gamma^n (\Delta y, \Delta\mu, \Delta\lambda).
\end{equation}
Please note that if $\gamma^n = 1$, then $y^{n+1}\geq 0$ would be feasible for problem (\ref{eq:21}) since $Ay^{n+1} = b$.
However, in practice we usually have $\gamma^n < 1$.
Hence, a complementarity tolerance condition (\ref{eq:26}) can be met even if the system $B_{t-1}x_{t-1} + A_tx_t = b_t$ is not satisfied within the desired precision.
In order to address this concern, in our numerical experiments we set the tolerance $\varepsilon_c$ to the smallest possible value allowed by the solver ($10^{-12}$).

\section{Numerical Experiments}
\label{s:7}
In this section we study the computational performance of the algorithms proposed above.
We focus our analysis on the following questions.
\begin{itemize}
\item How is the computational performance of Algorithms \ref{a:reg-sddp} and \ref{a:reg-sddp-markov} affected by:
\begin{itemize}
\item the dimension of the resource vector $R_t$?
\item the size of the post--decision information state space ${\cal I}^x_t$?
\end{itemize}
\item How does the performance of  Algorithms \ref{a:reg-sddp} and \ref{a:reg-sddp-markov} compare to their non--regularized counterparts?
\end{itemize}

Our experimental work was conducted using the setting of optimizing grid level storage for a large transmission grid managed by PJM Interconnection.  PJM manages grid level storage devices from a single location, making it a natural setting for testing our algorithms.  As of this writing, grid level storage is dropping in price, providing a meaningful setting to evaluate the performance of our algorithms for a wide range of storage devices, challenging the ability of the algorithms to handle high dimensional applications.  For this reason, we conducted tests on networks with 50 to 500 storage devices.  These are much higher dimensional problems than prior research that has focused on the management of water reservoirs.  

Another distinguishing feature of our grid storage setting (compared to prior experimental work) is that a natural time step is 5 minutes, which is the frequency with which real--time electricity prices (known as LMPs, for locational marginal prices) are updated on the PJM grid.  We anticipate using storage devices to hold energy over horizons of several hours.  For this reason, we used a 24 hour model, divided into 5--minute increments, for 288 time periods, which is quite large compared to many applications using this algorithmic technology.

A complete description of the given model is beyond the scope of the current paper and can be found in \cite{asamov2015multistage}.
Below we briefly describe the construction of the network, and the exogenous stochastic process. Finally we present the results of an extensive set of experiments investigating the effect of regularization, the number of storage devices (which determines the dimensionality of $R_t$), and the presence of an exogenous post--decision information state, on the rate of convergence and solution quality. 

\subsection{The network}

We performed our experiments using an aggregated version of the PJM grid. Instead of the full network with 9,000 buses and 14,000 transmission lines, we limited our analysis to the higher voltage lines, producing a grid with 1,360 buses and 1,715 transmission lines.  The power generators include 396 gas turbines (23,309 MW), 50 combined cycle generators (21,248 MW),
264 steam generators (73,374 MW), 31 nuclear reactors (31, 086 MW), and 84 conventional hydro power generators (2,217 MW). Off--shore wind power was simulated for a set of hypothetical wind turbines with a combined maximum capacity of 16 GW.  Moreover, we consider a daily time horizon with 5--minute discretization resulting in a total of 288 time periods.

The data was prepared by first running a unit--commitment simulator called SMART--ISO that determines which generators are on or off at each point in time, given forecasts of wind generated from a planned set of off--shore wind farms.  We made the assumption that the use of grid level storage would not change which generators are on or off at any point in time.  However, we simultaneously optimize ramping the generators up or down within ranges, while charging and discharging of storage devices around the grid in the presence of stochastic injections from the wind farms.

We placed the distributed storage devices at the points--of--interconnection for wind farms, as well as the buses with the highest demand. Each storage device is characterized by its minimum and maximum energy capacity, its charging and discharging efficiency, and its variable storage cost. The control of multiple storage devices in a distributed energy system is a challenging task that depends on a variety of factors such as the location of each device, and the presence of transmission line
congestion. A good storage algorithm needs to respond to daily variations in supply, demand and congestion, taking advantage of opportunities to store energy near generation points (to avoid congestion) or near load points (during off--peak periods). It has to balance when and where to store and discharge in a stochastic, time--dependent setting, providing a challenging test environment for our algorithm.

\subsection{The exogenous information}
\label{s:7.1}

Our only source of uncertainty (the exogenous information) was from the injected wind from the offshore wind farms.  In order to calibrate our stochastic wind error model, we employed historical wind data and speed measurements of off--shore wind for the month of January 2010.  For each time period, we consider a set of ten vectors of possible wind speed realizations which correspond to ten different weather regimes.

In general, the exogenous information process can be characterized by one of the following: stagewise independence, compact state variables (Markov processes), or scenario--dependence (path dependence). For some instances, the latter case could be reduced to one of the former two by applying an appropriate transformation as described in section \ref{s:5}.
In our experiments, we consider instances with stagewise independent transitions between ten equally likely scenarios.  When we assumed stagewise independence, we would sample from each of these 10 scenarios with equal probability at each time period.  For the problems with Markov uncertainty, we assumed that at every time period $t$, the probability of continuing with the same weather regime at time $t+1$ is 91 percent.  Additionally, each of the remaining nine regimes can be visited at time $t+1$ with a probability of 1 percent.
\begin{figure}
\begin{tikzpicture}
\pgfplotsset{every axis legend/.append style={font=\footnotesize,at={(0.45,-0.25)},anchor=north, legend columns=5}}
\begin{axis}[
height=2.1in,
width=4.8in,
title={Wind Power Simulations},
grid=major,
date coordinates in=x,
xticklabel={\hour:\minute},
x tick label style={align=center},
date ZERO=2010-01-11, 
xmin={2010-01-11 12:00}, 
xmax={2010-01-12 12:00},
xtick={2010-01-11 12:00, 
2010-01-11 18:00,
2010-01-11 24:00,
2010-01-12 6:00,
2010-01-12 12:00},
xlabel={Time},
ylabel={MWh}
]
\addplot[color=red, line width = 1] table [col sep=comma,trim cells=true,y=$S_1$] {data_sim_only.csv}; \addlegendentry{Simulation 1}
\addplot[color=blue, line width = 1] table [col sep=comma,trim cells=true,y=$S_2$] {data_sim_only.csv}; \addlegendentry{Simulation 2}
\addplot[color=green, line width = 1] table [col sep=comma,trim cells=true,y=$S_3$] {data_sim_only.csv}; \addlegendentry{Simulation 3}
\addplot[color=black, line width = 1] table [col sep=comma,trim cells=true,y=$S_4$] {data_sim_only.csv}; \addlegendentry{Simulation 4}
\addplot[color=orange, line width = 1] table [col sep=comma,trim cells=true,y=$S_5$] {data_sim_only.csv}; \addlegendentry{Simulation 5}
\addplot[color=purple, line width = 1] table [col sep=comma,trim cells=true,y=$S_6$] {data_sim_only.csv}; \addlegendentry{Simulation 6}
\addplot[color=brown, line width = 1] table [col sep=comma,trim cells=true,y=$S_7$] {data_sim_only.csv}; \addlegendentry{Simulation 7}
\addplot[color=gray, line width = 1] table [col sep=comma,trim cells=true,y=$S_8$] {data_sim_only.csv}; \addlegendentry{Simulation 8}
\addplot[color=yellow, line width = 1] table [col sep=comma,trim cells=true,y=$S_9$] {data_sim_only.csv}; \addlegendentry{Simulation 9}
\addplot[color=cyan, line width = 1] table [col sep=comma,trim cells=true,y=$S_{10}$] {data_sim_only.csv}; \addlegendentry{Simulation 10}

\end{axis}
\end{tikzpicture}
\caption{Simulated daily realizations of wind power for a given wind farm over 24 hour time horizon.}
\label{fig:wind}
\end{figure}
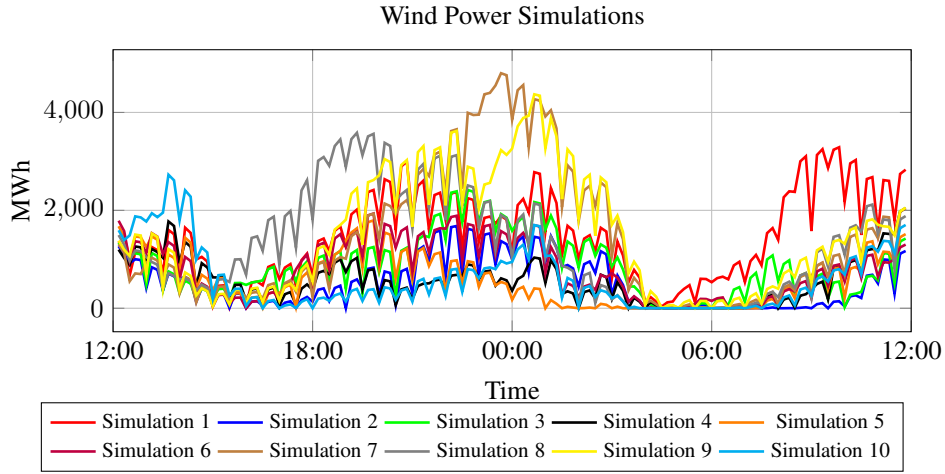
 
\subsection{Algorithmic comparisons}

The proposed algorithms were implemented in Java, and the IBM ILOG CPLEX 12.4  solver was used for the solution of linear and quadratic convex optimization problems. Further, we performed parameter tuning as described in section \ref{s:tuning}. We set the relative complementarity tolerance of CPLEX to $10^{-12}$, and used a geometric regularization sequence with $\varrho^0 = 1$ and $r=0.95$. Additionally, we run each method for $K = 300$ iterations. 
Moreover, the scaling matrices $Q_t, t=0,\dots,T$ are set to the identity matrix which implies that the amount of energy in each storage device has the same weight in the regularization term. In this section we examine the performance of Algorithms \ref{a:reg-sddp} and \ref{a:reg-sddp-markov} when the number of storage devices (dimension of the resource state variable) is $|R^x_t| = 50, 100, 200, 500$.

Plots of the behavior of the methods can be found in Figures \ref{fig:50}, \ref{fig:100}, \ref{fig:200}, \ref{fig:500} below. Each figure shows the results for stagewise independence on the left, and Markov uncertainty on the right. These graphs show the convergence of the upper and lower bounds, illustrating the dramatic impact of regularization, especially as the number of dimensions grow.  The results suggest that we consistently obtain high quality solutions within approximately 50 iterations for all problems.
\input{test50x10-combined1.tex}
\input{test100x10-combined1.tex}
\input{test200x10-combined1.tex}
\input{test500x10-combined2.tex}

Table \ref{t:swi} and Table \ref{t:m} show the CPU times (in seconds) per iteration for problems with 50 to 500 storage devices, with stagewise independence and Markov uncertainty, for up to 300 iterations.  We note that in a practical application, the algorithms would be run offline (for example, the day before, given a particular forecast of wind).  The cuts would be stored and then used in real time the next day.  This would be easily implementable in a policy updated every 5 minutes.  
\begin{table}
\begin{center}
\begin{tabular}{|l|c|c|c|c|c|c|c|c|}
\hline
\multicolumn{2}{|c|}{\backslashbox{$|R^x_t|$}{\# Iterations}} & 1 & 50 & 100 & 150 & 200 & 250 & 300 \\
\hline
50 
& \begin{tabular}{@{}c@{}}Algorithm 1\\SDDP\end{tabular} 
&  \begin{tabular}{@{}c@{}}182.0\\ 181.0\end{tabular}
&  \begin{tabular}{@{}c@{}}217.6\\ 196.2\end{tabular}
&  \begin{tabular}{@{}c@{}}230.8\\ 201.2\end{tabular}
&  \begin{tabular}{@{}c@{}}248.3\\ 208.9\end{tabular}
&  \begin{tabular}{@{}c@{}}266.5\\ 215.4\end{tabular}
&  \begin{tabular}{@{}c@{}}284.3\\ 223.4\end{tabular}
&  \begin{tabular}{@{}c@{}}299.2\\ 230.4\end{tabular}
\\ \hline
100 
& \begin{tabular}{@{}c@{}}Algorithm 1\\SDDP\end{tabular} 
&  \begin{tabular}{@{}c@{}}237.0\\ 246.0\end{tabular}
&  \begin{tabular}{@{}c@{}}306.2\\ 250.0\end{tabular}
&  \begin{tabular}{@{}c@{}}334.3\\ 262.2\end{tabular}
&  \begin{tabular}{@{}c@{}}371.7\\ 275.1\end{tabular}
&  \begin{tabular}{@{}c@{}}412.9\\ 296.0\end{tabular}
&  \begin{tabular}{@{}c@{}}453.9\\ 319.7\end{tabular}
&  \begin{tabular}{@{}c@{}}500.4\\ 341.4\end{tabular}
\\ \hline
200 
& \begin{tabular}{@{}c@{}}Algorithm 1\\SDDP\end{tabular} 
&  \begin{tabular}{@{}c@{}}293.0\\ 265.0\end{tabular}
&  \begin{tabular}{@{}c@{}}358.8\\ 375.3\end{tabular}
&  \begin{tabular}{@{}c@{}}414.3\\ 360.7\end{tabular}
&  \begin{tabular}{@{}c@{}}507.0\\ 394.5\end{tabular}
&  \begin{tabular}{@{}c@{}}587.6\\ 428.8\end{tabular}
&  \begin{tabular}{@{}c@{}}653.5\\ 469.8\end{tabular}
&  \begin{tabular}{@{}c@{}}726.0\\ 525.5\end{tabular}
\\ \hline
500 
& \begin{tabular}{@{}c@{}}Algorithm 1\\SDDP\end{tabular} 
&  \begin{tabular}{@{}c@{}}553.0\\ 332.0\end{tabular}
&  \begin{tabular}{@{}c@{}}664.0\\ 426.5\end{tabular}
&  \begin{tabular}{@{}c@{}}828.4\\ 564.6\end{tabular}
&  \begin{tabular}{@{}c@{}}995.4\\ 651.5\end{tabular}
&  \begin{tabular}{@{}c@{}}1183.3\\ 751.6\end{tabular}
&  \begin{tabular}{@{}c@{}}1673.5\\ 869.8\end{tabular}
&  \begin{tabular}{@{}c@{}}2536.0\\ 1003.2\end{tabular}
\\ \hline
\end{tabular}
\caption{Computational time per iteration (in seconds) for risk--neutral stochastic optimization methods.}
\label{t:swi}
\end{center}
\end{table}
\begin{table}
\begin{center}
\begin{tabular}{|l|c|c|c|c|c|c|c|c|}
\hline
\multicolumn{2}{|c|}{\backslashbox{$|R^x_t|$}{\# Iterations}} & 1 & 50 & 100 & 150 & 200 & 250 & 300 \\
\hline
50 
& \begin{tabular}{@{}c@{}}Algorithm 2\\MSDDP\end{tabular} 
&  \begin{tabular}{@{}c@{}}180.0\\ 181.0\end{tabular}
&  \begin{tabular}{@{}c@{}}225.6\\ 192.7\end{tabular}
&  \begin{tabular}{@{}c@{}}239.1\\ 198.2\end{tabular}
&  \begin{tabular}{@{}c@{}}258.2\\ 206.3\end{tabular}
&  \begin{tabular}{@{}c@{}}277.0\\ 213.3\end{tabular}
&  \begin{tabular}{@{}c@{}}294.2\\ 221.8\end{tabular}
&  \begin{tabular}{@{}c@{}}310.1\\ 228.9\end{tabular}
\\ \hline
100 
& \begin{tabular}{@{}c@{}}Algorithm 2\\MSDDP\end{tabular} 
&  \begin{tabular}{@{}c@{}}256.0\\ 255.0\end{tabular}
&  \begin{tabular}{@{}c@{}}309.5\\ 255.7\end{tabular}
&  \begin{tabular}{@{}c@{}}336.1\\ 267.2\end{tabular}
&  \begin{tabular}{@{}c@{}}371.5\\ 279.3\end{tabular}
&  \begin{tabular}{@{}c@{}}411.2\\ 300.8\end{tabular}
&  \begin{tabular}{@{}c@{}}450.3\\ 325.1\end{tabular}
&  \begin{tabular}{@{}c@{}}495.6\\ 347.1\end{tabular}
\\ \hline
200 
& \begin{tabular}{@{}c@{}}Algorithm 2\\MSDDP\end{tabular} 
&  \begin{tabular}{@{}c@{}}296.0\\ 339.0\end{tabular}
&  \begin{tabular}{@{}c@{}}364.6\\ 301.6\end{tabular}
&  \begin{tabular}{@{}c@{}}422.2\\ 319.4\end{tabular}
&  \begin{tabular}{@{}c@{}}513.0\\ 364.9\end{tabular}
&  \begin{tabular}{@{}c@{}}592.6\\ 409.3\end{tabular}
&  \begin{tabular}{@{}c@{}}657.4\\ 454.8\end{tabular}
&  \begin{tabular}{@{}c@{}}731.1\\ 515.1\end{tabular}
\\ \hline
500 
& \begin{tabular}{@{}c@{}}Algorithm 2\\MSDDP\end{tabular} 
&  \begin{tabular}{@{}c@{}}542.0\\ 338.0\end{tabular}
&  \begin{tabular}{@{}c@{}}650.3\\ 434.7\end{tabular}
&  \begin{tabular}{@{}c@{}}799.5\\ 586.7\end{tabular}
&  \begin{tabular}{@{}c@{}}959.4\\ 674.3\end{tabular}
&  \begin{tabular}{@{}c@{}}1151.7\\ 777.2\end{tabular}
&  \begin{tabular}{@{}c@{}}1637.6\\ 886.8\end{tabular}
&  \begin{tabular}{@{}c@{}}2490.4\\ 1004.1\end{tabular}
\\ \hline
\end{tabular}
\caption{Computational time per iteration (in seconds) for risk--neutral stochastic optimization methods.}
\label{t:m}
\end{center}
\end{table}

\section{Conclusion}
\label{s:8}
Large scale multistage stochastic optimization problems with long time horizons arise in numerous real--world applications in energy, finance, transportation and other fields.
The numerical solution of such models can be computationally demanding, often causing practioners to face a trade--off between solution quality and computational time.

In our work we have developed regularization methods for the SDDP framework and studied their convergence.
The algorithms employ regularization terms in the selection of cutting hyperplanes which improve the quality of the resulting value function approximations.
The proposed techniques feature straightforward implementation and can be quickly integrated into existing software solutions without the need for major additional efforts in development and testing.

In order to assess the performance of the proposed approach we consider a model for the integration of renewable energy using distributed grid--level storage into the grid of PJM, one of the largest regional transmission operators in the United States.
Our numerical experiments indicate that the proposed regularized algorithms exhibits significantly faster convergence than their non--regularized counterparts, with greater gains observed for higher--dimensional problems.

In the future we can consider several extensions of the current work. One possible direction would involve further investigation into the selection of appropriate regularization terms and coefficients.
Another possible path of exploration would be the application of regularization techniques for the solution of risk--averse models involving time--consistent compositions of coherent measures of risk along the lines of \cite{kozmik2013risk, asamov2014risk}, and \cite{shapiro2013risk}.
Additionally, we would also like to extend the proposed approach to the solution of multiobjective stochastic models \cite{young2010multiobjective}.
Finally, obtaining further empirical results and insights from problems in the field would also be a subject of great interest.


\bibliographystyle{siamplain}
\bibliography{myrefs}

\begin{thebibliography}{10}

\bibitem{asamov2015multistage}
{\sc T.~Asamov and W.~B. Powell}, {\em Multistage stochastic optimization
  methods applied to grid energy storage}, tech. report, Department of
  Operations Research and Financial Engineering, Princeton University, 2015.

\bibitem{asamov2014risk}
{\sc T.~Asamov and A.~Ruszczy\'{n}ski}, {\em Time-consistent approximations of
  risk-averse multistage stochastic optimization problems}, Mathematical
  Programming,  (2014), pp.~1--35,,
  \url{http://dx.doi.org/10.1007/s10107-014-0813-x},
  \url{http://dx.doi.org/10.1007/s10107-014-0813-x}.

\bibitem{benders1962partitioning}
{\sc J.~F. Benders}, {\em Partitioning procedures for solving mixed-variables
  programming problems}, Numerische Mathematik, 4 (1962), pp.~238--252.

\bibitem{benson2009interior}
{\sc H.~Y. Benson}, {\em Interior-point linear programming solvers}, Wiley
  Encyclopedia of Operations Research and Management Science,  (2009).

\bibitem{birge1985decomposition}
{\sc J.~R. Birge}, {\em Decomposition and partitioning methods for multistage
  stochastic linear programs}, Operations Research, 33 (1985), pp.~989--1007.

\bibitem{chiralaksanakul2004assessing}
{\sc A.~Chiralaksanakul and D.~P. Morton}, {\em Assessing policy quality in
  multi-stage stochastic programming}, Humboldt-Universit{\"a}t zu Berlin,
  Mathematisch-Naturwissenschaftliche Fakult{\"a}t II, Institut f{\"u}r
  Mathematik,  (2004).

\bibitem{de2013sharing}
{\sc A.~R. De~Queiroz and D.~P. Morton}, {\em Sharing cuts under aggregated
  forecasts when decomposing multi-stage stochastic programs}, Operations
  Research Letters, 41 (2013), pp.~311--316.

\bibitem{donohue2006abridged}
{\sc C.~J. Donohue and J.~R. Birge}, {\em The abridged nested decomposition
  method for multistage stochastic linear programs with relatively complete
  recourse}, Algorithmic Operations Research, 1 (2006).

\bibitem{gulten2014two}
{\sc S.~G{\"u}lten and A.~Ruszczy{\'n}ski}, {\em Two-stage portfolio
  optimization with higher-order conditional measures of risk}, Annals of
  Operations Research, 229 (2014), pp.~409--427.

\bibitem{higle1994finite}
{\sc J.~L. Higle and S.~Sen}, {\em Finite master programs in regularized
  stochastic decomposition}, Mathematical Programming, 67 (1994), pp.~143--168.

\bibitem{infanger1996cut}
{\sc G.~Infanger and D.~P. Morton}, {\em Cut sharing for multistage stochastic
  linear programs with interstage dependency}, Mathematical Programming, 75
  (1996), pp.~241--256.

\bibitem{kelley1960cutting}
{\sc J.~E. Kelley, Jr}, {\em The cutting-plane method for solving convex
  programs}, Journal of the Society for Industrial \& Applied Mathematics, 8
  (1960), pp.~703--712.

\bibitem{kozmik2013risk}
{\sc V.~Kozm{\i}k and D.~Morton}, {\em Risk-averse stochastic dual dynamic
  programming}, Optimization Online. http://www.
  optimization-online.org/DB\_HTML/2013/02/3794.html,  (2013).

\bibitem{linowsky2005convergence}
{\sc K.~Linowsky and A.~B. Philpott}, {\em On the convergence of sampling-based
  decomposition algorithms for multistage stochastic programs}, Journal of
  Optimization Theory and Applications, 125 (2005), pp.~349--366.

\bibitem{maceira2006use}
{\sc M.~Maceira and J.~Dam{\'a}zio}, {\em Use of the par (p) model in the
  stochastic dual dynamic programming optimization scheme used in the operation
  planning of the brazilian hydropower system}, Probability in the Engineering
  and Informational Sciences, 20 (2006), pp.~143--156.

\bibitem{mo2001integrated}
{\sc B.~Mo, A.~Gjelsvik, and A.~Grundt}, {\em Integrated risk management of
  hydro power scheduling and contract management}, Power Systems, IEEE
  Transactions on, 16 (2001), pp.~216--221.

\bibitem{morton1996enhanced}
{\sc D.~P. Morton}, {\em An enhanced decomposition algorithm for multistage
  stochastic hydroelectric scheduling}, Annals of Operations Research, 64
  (1996), pp.~211--235.

\bibitem{nesterov2004introductory}
{\sc Y.~Nesterov and I.~E. Nesterov}, {\em Introductory lectures on convex
  optimization: A basic course}, vol.~87, Springer, 2004.

\bibitem{PereiraPinto}
{\sc M.~V.~F. Pereira and L.~M. V.~G. Pinto}, {\em Multi-stage stochastic
  optimization applied to energy planning}, Mathematicsl Programming, 52
  (1991), pp.~359--375.

\bibitem{PhilpottDeMatos}
{\sc A.~B. Philpott and V.~L. De~Matos}, {\em Dynamic sampling algorithms for
  multistage stochastic programs with risk aversion}, tech. report, Electric
  Power Optimization Centre, University of Auckland, 2011.

\bibitem{philpott2008convergence}
{\sc A.~B. Philpott and Z.~Guan}, {\em On the convergence of stochastic dual
  dynamic programming and related methods}, Operations Research Letters, 36
  (2008), pp.~450--455.

\bibitem{powell2007approximate}
{\sc W.~B. Powell}, {\em Approximate Dynamic Programming: Solving the curses of
  dimensionality}, John Wiley \& Sons, 2011.

\bibitem{puterman2014markov}
{\sc M.~L. Puterman}, {\em Markov decision processes: discrete stochastic
  dynamic programming}, John Wiley \& Sons, 2014.

\bibitem{rockafellar1976monotone}
{\sc R.~T. Rockafellar}, {\em Monotone operators and the proximal point
  algorithm}, SIAM Journal on Control and Optimization, 14 (1976),
  pp.~877--898.

\bibitem{ruszczynski1986regularized}
{\sc A.~Ruszczy{\'n}ski}, {\em A regularized decomposition method for
  minimizing a sum of polyhedral functions}, Mathematical Programming, 35
  (1986), pp.~309--333.

\bibitem{ruszczynski1993regularized}
{\sc A.~Ruszczy{\'n}ski}, {\em Regularized decomposition of stochastic
  programs: algorithmic techniques and numerical results}, Citeseer,  (1993).

\bibitem{ruszczynski1997decomposition}
{\sc A.~Ruszczy{\'n}ski}, {\em Decomposition methods in stochastic
  programming}, Mathematical Programming, 79 (1997), pp.~333--353.

\bibitem{ruszczynski2010risk}
{\sc A.~Ruszczy{\'n}ski}, {\em Risk-averse dynamic programming for markov
  decision processes}, Mathematical Programming, 125 (2010), pp.~235--261.

\bibitem{ruszczynski1997accelerating}
{\sc A.~Ruszczy{\'n}ski and A.~{\'S}wietanowski}, {\em Accelerating the
  regularized decomposition method for two stage stochastic linear problems},
  European Journal of Operational Research, 101 (1997), pp.~328--342.

\bibitem{ruszczynski2006nonlinear}
{\sc A.~P. Ruszczy{\'n}ski}, {\em Nonlinear optimization}, vol.~13, Princeton
  university press, 2006.

\bibitem{sen2014multistage}
{\sc S.~Sen and Z.~Zhou}, {\em Multistage stochastic decomposition: a bridge
  between stochastic programming and approximate dynamic programming}, SIAM
  Journal on Optimization, 24 (2014), pp.~127--153.

\bibitem{shapiro2011analysis}
{\sc A.~Shapiro}, {\em Analysis of stochastic dual dynamic programming method},
  European Journal of Operational Research, 209 (2011), pp.~63--72.

\bibitem{shapiro2014lectures}
{\sc A.~Shapiro, D.~Dentcheva, and A.~Ruszczy{\'n}ski}, {\em Lectures on
  stochastic programming: modeling and theory}, vol.~16, SIAM, 2014.

\bibitem{shapiro2013risk}
{\sc A.~Shapiro, W.~Tekaya, J.~P. da~Costa, and M.~P. Soares}, {\em Risk
  neutral and risk averse stochastic dual dynamic programming method}, European
  Journal of Operational Research, 224 (2013), pp.~375--391.

\bibitem{van1969shaped}
{\sc R.~M. Van~Slyke and R.~Wets}, {\em L-shaped linear programs with
  applications to optimal control and stochastic programming}, SIAM Journal on
  Applied Mathematics, 17 (1969), pp.~638--663.

\bibitem{wright1997primal}
{\sc S.~J. Wright}, {\em Primal-dual interior-point methods}, Siam, 1997.

\bibitem{young2010multiobjective}
{\sc C.~M. Young, M.~Li, Y.~Zhu, M.~Xie, E.~A. Elsayed, and T.~Asamov}, {\em
  Multiobjective optimization of a port-of-entry inspection policy}, Automation
  Science and Engineering, IEEE Transactions on, 7 (2010), pp.~392--400.

\end{thebibliography}
\end{document}